\let\f=\frac
\let\S=\Sigma
\let\Om=\Omega
\def\ba{\begin{eqnarray}}
\def\ea{\end{eqnarray}}
\def\R{\Bbb R}
\def\S{\Bbb S}
\def\N{\Bbb N}
\def\Z{\Bbb Z}
\def\no{\noindent}
\def\na{\nabla}
\def\eqdef{\buildrel\hbox{\footnotesize def}\over =}
\def\endproof{\hphantom{MM}\hfill\llap{$\square$}\goodbreak}
\newcommand{\beq}{\begin{equation}}
\newcommand{\eeq}{\end{equation}}
\newcommand{\ben}{\begin{eqnarray}}
\newcommand{\een}{\end{eqnarray}}
\newcommand{\beno}{\begin{eqnarray*}}
\newcommand{\eeno}{\end{eqnarray*}}
\newtheorem{Theorem}{Theorem}[section]
\newtheorem{Proposition}[Theorem]{Proposition}
\newtheorem{Lemma}[Theorem]{Lemma}
\newtheorem{Remark}[Theorem]{Remark}
\begin{document}

\title[Energy identity for approximate harmonic map]{Energy identity for approximate harmonic maps from surface to general targets}

\author{Wendong Wang}
\address{School of Mathematical Sciences\\ Dalian University of Technology\\ Dalian, 116024,  China}
\email{wendong@dlut.edu.cn}

\author{Dongyi Wei}
\address{School of Mathematical Sciences\\ Peking University\\ Beijing 100871,China}
\email{jnwdyi@163.com}

\author{Zhifei Zhang}
\address{School of Mathematical Sciences and LMAM \\ Peking University\\ Beijing 100871,China}
\email{zfzhang@math.pku.edu.cn}

\date{\today}

\maketitle

\begin{abstract}
Let $u_n$ be a sequence of mappings  from a closed Riemannian surface  $M$  to a general Riemannian manifold $N$. If $u_n$ satisfies
\beno
\sup_{n}\big(\|\nabla u_n\|_{L^2(M)}+\|\tau(u_n)\|_{L^{p}(M)}\big)\leq \Lambda\quad \text{for some}\,\,p>1,
\eeno
where $\tau(u_n)$ is the tension field of $u_n$, then there hold the so called energy identity and neckless property during blowing  up. This result is sharp by Parker's example, where the tension fields of the mappings from Riemannian surface are bounded in $L^1(M)$ but the energy identity fails.
\end{abstract}

\setcounter{equation}{0}
\section{Introduction}
Let $(M,g)$ be a closed Riemannian manifold and $(N,h)$ be a Riemannian manifold without boundary. Let $u$ be a mapping from $M$ to $N$ in $W^{1,2}(M,N)$. We define the Dirichlet energy of $u$ as follows
\beno
E(u)=\int_{M}e(u)dV,
\eeno
where $dV$ is the volume element of $(M,g)$, and $e(u)$ is the density of $u$
\beno
e(u)=\frac{1}{2}|du|^2=\text{Trace}_g u^{*}h,
\eeno
where $u^{*}h$ is the pull-back of the metric tensor $h$.

A map $u\in C^1(M,N)$ is called harmonic if it is a critical point of the energy $E$. By the Nash embedding theorem, $(N,h)$ can be isometrically embedded into a Euclidean space $\R^k$ for some positive integer $k$ with the metric induced from the Euclidean metric. Hence,
a map $u\in C^1(M,N)$ can be viewed as a map of $C^1(M,\R^k)$ whose image lies in $N$. Then we can obtain the Euler-Lagrange equation
\ben\label{eq:u}
\triangle u-A(u)(du,du)=0,\ \ \text{or} \ \ P(u)\triangle u=0,
\een
where $A(u)(du,du)$ is the second fundamental form of $N$ in $\R^k$. Let $P(y):\R^k\rightarrow T_yN$ be the orthogonal projection map. The tension field $\tau(u)$ is defined by
\ben\label{eq:u-app}
\tau(u)\eqdef\triangle u-A(u)(du,du)=P(u)\triangle u.
\een
Then $u$ is harmonic if and only if  $\tau(u)=0$. We refer to \cite{LW-book} for the systematic study on the harmonic maps.

The harmonic maps are of special interest when $M$ is a Riemannian surface, because the Dirichlet energy is conformally invariant in two dimensions. It is an important question to understand the limiting behavior of sequences of harmonic maps. Let $u_n$ be a sequence of mappings  from Riemannian surface $M$ to $N$ with bounded
energy. It is clear that $u_n$ converges weakly to $u$ in $W^{1,2}(M,N)$ for some $u\in W^{1,2}(M,N)$. In general, it may not converge strongly in $W^{1,2}(M,N)$ due to the concentration of the energy at finitely many points \cite{SU}. Thus, it is natural to ask (1) whether the lost
energy is exactly the sum of energies of some harmonic spheres(bubbles), which are defined as harmonic maps from $\S^2$ to $N$; (2) whether attaching all possible bubbles to the weak limit gives uniform convergence. The first one is so called the energy identity, and the second one is called the bubble tree convergence.

When $\tau(u_n)=0$,  Jost and Parker \cite{Parker} independently  proved the energy identity and neckless property during blowing up.
When $\tau(u_n)$ is bounded in $L^2(M)$, the energy identity was proved by Qing \cite{Qing} for the sphere, by Ding and Tian \cite{DT} and Wang \cite{Wang} for general target manifold. Qing and Tian \cite{QT} also proved neckless property during blowing up. One can refer to\cite{Top1,Top2, LW-CVPDE} for the related results of the heat flow of harmonic maps. Notice that $L^2(M)$ space for the tension field is not conformally invariant. So, a natural substitution of $L^2(M)$ space seems $L^1(M)$ space. However, Parker \cite{Parker} construct a sequence of mappings from Riemannian surface, in which the tension fields are bounded in $L^1(M)$ but the energy identity fails. This motivates  the following important question:
\medskip

{\bf Q: whether the energy identity
or the neckless property holds for a general target
manifold when the tension field is bounded in $L^p(M)$ for $p>1$?}
\medskip

This question has been solved by Lin and Wang \cite{LW-CAG} when the target manifold is the sphere. Li and Zhu \cite{LZ1} also proved the energy identity for the tension fields bounded in $L\ln^+L$, and constructed a
sequence of mappings with tension fields bounded in $L\ln^+L$ so that
there is a positive neck during blowing up. For general target manifolds, partial important progress has been made: Li and Zhu \cite{LZ2} proved the energy identity and  the neckless property for $p\geq \frac65$; Luo \cite{Luo}  obtained the same result under the following condition
\beno
\Big(\int_{D_r\setminus D_{r/2}}|\tau(u_n)|^2dx\Big)^\f12\le Cr^{-a}
\eeno
for some $a\in (0,1)$ and any $r\in (0,1)$. Here we denote by $D(x,r)$ the ball with the center $x$ and the radius $r$ and $D_r=D(0,r)$.
\medskip

The goal of this paper is to give a positive answer to {\bf Q}.
When $\tau(u_n)$ is bounded in $L^p(M)$ for some $p>1$, the small energy regularity (see Lemma \ref{lem:small energy}) implies that $u_n$ converges strongly in $W^{1,2}(M,N)$ outside a finite set of points. For simplicity,  we assume that $M$ is the unit disk $D_1=D(0,1)$ and 0 is the only one singular point.\medskip

Our main result is stated as follows.

\begin{Theorem}\label{thm:energy}
Let $\{u_n\}$ be a sequence of mappings from $D_1$ to $N$ in $W^{1,2}(D_1,N)$ with tension field $\tau(u_n)$ satisfying
\beno
\|u_n\|_{\dot{W}^{1,2}(D_1)}+\|\tau(u_n)\|_{L^p(D_1)}\leq \Lambda
\eeno
for some $p>1$, and for $0<\delta<1$,
\beno
u_n\rightarrow u\,\,{\rm strongly}\,\,{in}\,\, W^{1,2}(D_1\backslash{D_{\delta}},\R^k) \quad {\rm as\,\,} n\rightarrow\infty.
\eeno
Then there exist a subsequence of $\{u_n\}$ (still denoted by $\{u_n\}$) and some nonnegative integer $k_0$ such that for any $i=1,\cdots,k_0$, there exist points $x_n^i$, positive numbers $r_n^i$ and a nonconstant harmonic sphere $w_i$ (a map from $\R^2\cup\{\infty\}\rightarrow N$),
which satisfy

\begin{itemize}

\item[1.]  for any $i=1,\cdots,k_0$, $x_n^i\rightarrow 0, r_n^i\rightarrow 0$ as $n\rightarrow\infty.$

\item[2.]
\beno
\lim_{n\rightarrow\infty}\left(\frac{r_n^i}{r_n^j}+\frac{r_n^j}{r_n^i}+\frac{|x_n^i-x_n^j|}{r_n^i+r_n^j}\right)=\infty\quad {\rm for \,\, any}\quad i\neq j.
\eeno

\item[3. ]$w^i$ is the weak limit or strong limit of $u_n(x_n^i+r_n^ix)$ in $W^{1,2}_{loc}(\R^2,N)$.\\
\item[4.] {\bf Energy identity}
\beno
\lim_{n\rightarrow\infty}E(u_n,D_1)=E(u,D_1)+\sum_{i=1}^{k_0}E(w^i).
\eeno

\item[5.] {\bf Neckless property:} The image $u(D_1)\cup \bigcup_{i=1}^{k_0}w^i(\R^2)$ is a connected set.

\end{itemize}
\end{Theorem}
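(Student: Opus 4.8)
The plan is to follow the by-now-classical blow-up scheme (bubbling induction) but with the crucial new input being a uniform control of the energy on the *neck* regions via a refined estimate that exploits $p>1$. First I would set up the induction: using the small-energy regularity lemma, $u_n$ concentrates at $0$, and after rescaling one extracts the first bubble $w^1$ by choosing $x_n^1, r_n^1$ so that a fixed amount of energy lives in $D(x_n^1, r_n^1)$ but less in every smaller ball (the standard "first bubble" selection). Iterating, one obtains finitely many bubbles $w^1,\dots,w^{k_0}$ satisfying properties 1--3, the separation property 2 being automatic from the selection process, and the finiteness of $k_0$ following from the fact that each nonconstant harmonic sphere from $\S^2$ to $N$ carries energy bounded below by a positive constant $\varepsilon_0=\varepsilon_0(N)$, so at most $\Lambda^2/\varepsilon_0$ bubbles can appear. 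The content of the theorem is then items 4 and 5, and by the usual decomposition of $D_1$ into the weak-limit region, the bubble regions, and the neck regions $N_n^i$ (annuli $D(x_n^i, \delta) \setminus D(x_n^i, r_n^i/\delta)$ with intermediate annuli removed), both the energy identity and the neckless property reduce to a single assertion:
\beno
\lim_{\delta\to 0}\ \limsup_{n\to\infty}\ \int_{N_n^i} |\nabla u_n|^2 \, dx = 0,
\eeno
together with the oscillation control $\lim_{\delta\to 0}\limsup_n \mathrm{osc}_{N_n^i} u_n = 0$ (the latter giving connectedness, i.e. property 5).

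The heart of the matter is thus the no-neck-energy estimate on a single neck annulus $A(r,R)=D_R\setminus D_r$ with $r\ll R\ll 1$, on which $u_n$ has small energy on every dyadic subannulus. Here I would work in the conformal (polar-logarithmic) coordinate $(t,\theta)\in [\log r, \log R]\times \S^1$, write $u_n = \bar u_n(t) + u_n^*$ where $\bar u_n(t)$ is the angular average, and decompose the equation $\triangle u_n = A(u_n)(\nabla u_n,\nabla u_n)+\tau(u_n)$ accordingly. The oscillation part $u_n^*$ is controlled by a Poincaré-type inequality on each subannulus once the energy is small, so the delicate term is the radial average $\bar u_n(t)$, which satisfies an ODE-type inequality $|\bar u_n''(t)| \lesssim \|\nabla u_n\|_{L^2(\text{unit subannulus})}^2 + (\text{angular average of }|\tau(u_n)|)$. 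The standard $L^2$-tension argument (Ding--Tian, Lin--Wang) shows $\int |\nabla u_n|^2$ over the neck is controlled by a quantity that would be logarithmically divergent were it not for a Pohozaev-type identity; the new ingredient for $p>1$ is that the $L^p$ norm of $\tau(u_n)$ on a dyadic annulus of size $\rho$ contributes, via Hölder, a term of order $\rho^{2-2/p}\|\tau(u_n)\|_{L^p}$ which is summable over dyadic scales (since $2-2/p>0$), so the total tension contribution to the neck energy is $O(\delta^{2-2/p})\Lambda \to 0$. Combining this with the Pohozaev identity — which relates $\int_{\partial D_\rho}(\partial_r u_n \cdot \partial_\theta u_n)$ and the radial-vs-angular energy difference to a boundary-free quantity plus an error $\int_{A}(x\cdot\nabla u_n)\,\tau(u_n)\,dx$, again controlled by Hölder and the $L^p$ bound — one closes the estimate and gets decay of the neck energy as $\delta\to 0$.

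The main obstacle, and the reason the general-target case is harder than the sphere, is that for a general $N$ the projection identity $P(u)\triangle u = \tau(u)$ does not linearize the way the sphere equation does; one cannot simply use the div-curl/Wente structure globally. I expect the technical crux to be obtaining the ODE inequality for $\bar u_n$ with a right-hand side that is genuinely *small* on the neck (not merely bounded), which requires combining: (i) the small-energy $\varepsilon$-regularity to get $\|\nabla u_n\|_{L^\infty}$ and higher estimates on each unit subannulus with a gain, (ii) a careful iteration/summation over the dyadic scales controlling how the angular oscillation feeds back into the radial average through the second fundamental form term $A(u_n)(\nabla u_n, \nabla u_n)$, and (iii) the $L^p$, $p>1$, tension estimate to kill the borderline logarithmic divergence. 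Once the quantity $\int_{N_n^i}|\nabla u_n|^2$ is shown to be bounded by $C(\delta^{\alpha} + o_n(1))$ for some $\alpha=\alpha(p)>0$, the energy identity follows by adding up the (finitely many) regions, and the oscillation estimate — obtained along the way from $\int |\nabla \bar u_n| \lesssim (\text{neck energy})^{1/2}$ on each scale plus summability — yields that consecutive bubbles and the body touch, hence property 5.
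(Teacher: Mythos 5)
Your proposal reproduces the standard bubble‑tree construction for items 1--3 (which is also what the paper does, Section 2) and correctly reduces 4 and 5 to a no‑neck energy estimate and a no‑neck oscillation estimate. But the mechanism you propose for that reduction --- Pohozaev identity on dyadic annuli plus the tangential‑energy vanishing lemma plus H\"older on the tension term --- is essentially the Ding--Tian / Li--Zhu machinery, and for a \emph{general} target it is known to close only for $p\ge 6/5$ (Li--Zhu). The quantitative obstruction you are not seeing is that the favorable factor $\rho^{2-2/p}\|\tau\|_{L^p}$ you invoke sits against a gradient factor $\|\nabla u\|_{L^{p'}(A_\rho)}$ (or $L^{2p/(2-p)}$) coming from the $\varepsilon$‑regularity on the dyadic annulus, and that factor degrades like $\rho^{-(2-2/p)}$ when rescaled; the two powers cancel, and after summing one is left with a logarithm of the modulus $\delta/(r_n R)$, which diverges. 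Saying ``one closes the estimate'' here is precisely the gap: for $1<p<6/5$ this approach does not close, and this is exactly the open problem the paper solves.

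The paper's proof of 4 and 5 goes a completely different route, which your proposal does not anticipate. On a small‑energy annulus $\Omega=D_1\setminus D_r$, a Coulomb gauge moving frame $R$ is built (Section 3), turning the harmonic‑map system into a $\bar\partial$‑equation $\partial_{\bar z}A = wA + \tau^1$ with $\|w\|_{L^{2,1}}$ small; a further matrix gauge $B$ solving $B-T(B)=I_k$, $B^TB=I_k$ removes $w$, and writing $A=BG$, $G=G_1+G_2$ with $G_1$ holomorphic and $\|G_2\|_{L^{2p/(2-p)}}\lesssim\|\tau\|_{L^p}$, one finds $h=G^TG$ is $L^1$‑close to a holomorphic $h_0$. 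The neck estimates then come from the Laurent expansion of $G_1$: the coefficients $b_{-n}$ are bounded via Cauchy's formula, and the borderline coefficient $b_{-1}$ is controlled by a refined argument exploiting that a certain radial integral $I_1$ is real (so only $\mathrm{Im}\,a_{-1}$ is dangerous), giving $|b_{-1}|\lesssim\min(A_0/r,\ A_0^{1/2}+r^{1/2})$. Proposition 4.1 then propagates the holomorphic approximation to arbitrary energy level $m\epsilon$ by an induction with error exponent $3^{1-m}$, using a covering lemma (Lemma 4.4). Lemma 5.1 converts the approximation into $E(u,D_\rho\setminus D_{r/\rho})\lesssim (A_0+r)\ln(\rho^2/r)+\rho^{(4p-4)/p}$ and a matching oscillation bound, and these vanish as $\rho\to 0$ (not as $\delta^{2-2/p}$ as you guessed, but the issue is the argument, not the exponent). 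In short: the central idea you need --- and the one your proposal is missing --- is the holomorphic approximation of the Hopf differential with a quantified $L^1$ error in $\|\tau\|_{L^p}$, not a Pohozaev‑type identity.
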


\setcounter{equation}{0}
\section{Bubble tree structure of approximate harmonic maps}

Let us first recall the following small energy regularity result \cite{DT, LZ1}.

\begin{Lemma}\label{lem:small energy}
Let $u$ be a mapping from $D_1$ to $N$ in $W^{1,2}(D_1,N)$ with tension field $\tau(u)\in L^p(D_1)$  for $p>1.$ Then there exists a positive constant
$\epsilon_N$ depending on the target manifold $N$ such that if $E(u,D_1)\leq \epsilon_N^2$, then
\beno
\|u-\bar{u}\|_{W^{2,p}(D_{\frac12})}\leq C \left(\|\nabla u\|_{L^2(D_1)}+\|\tau(u)\|_{L^p(D_1)}\right),
\eeno
where $\bar{u}$ is the mean value of $u$ on the disk $D_{\frac12}$.
\end{Lemma}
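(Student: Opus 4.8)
The plan is to exploit the smallness of the energy together with the special structure of the harmonic-map system in order to upgrade $u$ from $W^{1,2}$ to $W^{2,p}$ on the smaller disc, in two stages: an initial gain of integrability for $\na u$ past the critical exponent $2$, and then a standard elliptic bootstrap. First I would write the equation in the Nash-embedded picture, $\Delta u=A(u)(du,du)+\tau(u)$ with the pointwise bound $|A(u)(du,du)|\le C_N|\na u|^2$, and recast the nonlinearity in antisymmetric form: with $\Omega_{ij}=A^i_{jk}(u)\na u^k-A^j_{ik}(u)\na u^k$ one has $\Omega=-\Omega^{T}$, $|\Omega|\le C_N|\na u|$, so that
\[
\Delta u=\Omega\cdot\na u+\tau(u),\qquad \|\Omega\|_{L^2(D_1)}\le C_N\|\na u\|_{L^2(D_1)}\le C_N\epsilon_N .
\]
Elliptic estimates applied directly to $\Delta u\in L^1+L^p$ give nothing beyond $W^{1,2}$ in two dimensions — the term $|\na u|^2$ is exactly borderline — so the gain must come from the div-curl structure. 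Using Uhlenbeck's gauge (equivalently, H\'elein's moving-frame reduction of the harmonic-map nonlinearity to Jacobian form) together with Wente's inequality, the smallness $\|\Omega\|_{L^2}\le C_N\epsilon_N$ closes a fixed-point argument on a slightly smaller disc and yields an exponent $q>2$ and a constant $C=C(N,p)$ with
\[
\|\na u\|_{L^q(D_{3/4})}\le C\big(\|\na u\|_{L^2(D_1)}+\|\tau(u)\|_{L^p(D_1)}\big);
\]
the absorption of the Wente term into the left-hand side, possible precisely because $\|\Omega\|_{L^2}$ is small, is what keeps this estimate linear in the two norms.

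Given $\na u\in L^q_{loc}$ with $q>2$, the remaining bootstrap is routine: $|A(u)(du,du)|\le C_N|\na u|^2\in L^{q/2}_{loc}$, hence $\Delta u\in L^{\min(q/2,p)}_{loc}$; Calder\'on--Zygmund gives $u\in W^{2,\min(q/2,p)}_{loc}$, and the two-dimensional Sobolev embedding $W^{2,s}\hookrightarrow W^{1,2s/(2-s)}$ for $s<2$ strictly raises the integrability of $\na u$ at each step, since $2s/(2-s)>s$ when $s>0$. After finitely many iterations $\na u\in L^{2p}_{loc}$, so $\Delta u=A(u)(du,du)+\tau(u)\in L^{p}_{loc}$, and one last application of Calder\'on--Zygmund on $D_{1/2}$ gives $u\in W^{2,p}(D_{1/2})$. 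Tracking the constants through these finitely many steps — interpolating the $L^{2s}$-norms between $L^2$ and a higher $L^r$-norm on a slightly larger disc, and using once more the smallness of $\|\na u\|_{L^2(D_1)}$ to keep every step linear — and replacing $u$ by $u-\bar u$ via Poincar\'e to control the lower-order terms yields the stated inequality.

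The main obstacle is the first stage: naive elliptic theory loses at the critical exponent, and extracting the initial $L^{2+}$ bound for $\na u$ genuinely requires the compensated-compactness structure of the harmonic-map nonlinearity (Wente's inequality, Uhlenbeck's gauge) combined with the hypothesis $E(u,D_1)\le\epsilon_N^2$; everything afterwards is classical. Minor technical points to watch: the source $\tau(u)$ lies only in $L^p$ with $p$ possibly close to $1$, so one must check that the bootstrap exponents stay consistent and that only finitely many iterations are needed; and one should fix generic intermediate radii between $1/2$ and $1$ to run the two stages, the value $3/4$ above being immaterial.
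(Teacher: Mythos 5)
The paper does not prove Lemma~\ref{lem:small energy}; it is quoted from \cite{DT,LZ1} without argument, so there is no ``paper proof'' to compare against. Your outline is the standard strategy that those references (and H\'elein's book) follow, and it is essentially correct: rewrite the system in Nash-embedded form, use the antisymmetric/Jacobian structure of the quadratic nonlinearity (your $\Omega$ is indeed antisymmetric because $\langle A(u)(X,Y),\nabla u\rangle=0$, so $A^j_{ik}\nabla u^k\cdot\nabla u^j=0$), get past the critical exponent by a gauge change plus compensated compactness, and then run a linear elliptic bootstrap. Your observation that the smallness of $\|\nabla u\|_{L^2(D_1)}$ is what lets you interpolate $\|\nabla u\|_{L^{2s}}^2$ against $\|\nabla^2 u\|_{L^s}$ and absorb (the Gagliardo--Nirenberg exponent is exactly $1/2$ here, which is what makes the final estimate come out linear in $\|\nabla u\|_{L^2}+\|\tau\|_{L^p}$) is the right and necessary point, and is often glossed over.

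One imprecision worth flagging in the first stage: Wente's inequality by itself gives $\nabla u\in L^{2,1}_{\mathrm{loc}}$ (hence $u$ continuous), not directly $L^q$ for a fixed $q>2$. To produce a genuine $L^q$ bound with $q>2$ one needs a little more than ``Wente + fixed point'': either an energy-decay/Morrey iteration (the route in \cite{DT}), or — closest in spirit to Section~3 of this paper — the reformulation $\partial_{\bar z}A=wA+\tau^1$ with $A=R^T\partial_z u$ in Coulomb gauge, where $w\in L^{2,1}$ is small, so that $I-T$ (with $T(B)=\frac{1}{\pi z}*(wB)$) inverts by Neumann series on $L^{2p/(2-p)}_{\mathrm{loc}}$ and $\frac{1}{\pi z}*\tau^1\in L^{2p/(2-p)}$ by Young in Lorentz spaces. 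Either of these closes the gap you leave, after which your bootstrap and absorption steps go through as you describe. So the proposal is sound in strategy, with one localized step that needs to be made precise.
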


Let $u_n$ be a sequence of mapping from $D_1$ to $N$ satisfying
\beno
\|u_n\|_{W^{1,2}(D_1)}+\|\tau(u_n)\|_{L^p(D_1)}\leq \Lambda
\eeno
for some $p>1$.
A point $x\in D_1$ is called an energy concentration point (blow-up point)
of $u_n$ if
for any $r$ so that $D(x,r)\subset D_1$, we have
\beno
\limsup_{n\rightarrow\infty}E(u_n,D(x,r))>\epsilon_N^2.
\eeno

Based on Lemma \ref{lem:small energy}, using standard blow-up argument as in \cite{Qing, DT}, it can be proved that for fixed sufficiently small $\epsilon\in(0,\epsilon_N)$, there exists $k_0\ge 0$ so that for any $i=1,\cdots, k_0$, there exist a point $x_n^i$
, a positive number $r_n^i$,
and a nonconstant harmonic sphere $w^i$ satisfying the conclusions $1-3$ in Theorem \ref{thm:energy}. Moreover, it holds that
\begin{itemize}

\item[1.] $\ w^i$ is the strong limit of $u_n(x_n^i+r_n^ix)$ in $W^{1,2}_{loc}(\R^2\setminus Z_i,N)$, where $Z_i$ is the set of blow-up points of this scaling sequence, thus $Z_i$ is finite and for $x\in Z_i$,
\beno
m_i(x)\triangleq\lim\limits_{\delta\to0}\lim\limits_{n\to\infty}E(u_n,D(x_n^i+r_n^ix,\delta r_n^i))\geq \epsilon_N.
\eeno
For the sake of completeness, we denote $x_n^0=0,\ r_n^0=1,\ Z_0=\{0\},\ w^0=u.$

\item[2.] $\ \exists\ f:\{1,\cdots,k_0\}\rightarrow \mathbb{Z}$ and $\delta_0, R_0>0$ so that
\beno
&&\ 0\leq f(j)<j,\quad \lim\limits_{n\to\infty}\dfrac{r_n^j}{r_n^{f(j)}}=0,\quad\lim\limits_{n\to\infty}
\dfrac{x_n^j-x_n^{f(j)}}{r_n^{f(j)}}=y_j\in Z_{f(j)},\\
&& \text{and} \quad E\big(u_n,D(x_n^j,r_n^{f(j)}{\delta_0})\backslash D(x_n^j,r_n^{j}{R_0})\big)\leq\epsilon.
\eeno

\item[3.] If $f(i)=f(j)$ and $y_i=y_j,$ then $i=j$, $Z_i=\big\{y_j|f(j)=i\big\}.$
\end{itemize}

Let us just present a sketch for the construction of $(x_n^1,\ r_n^1)$(see P.118-121 in \cite{Mcd} for similar construction). As $m_0(0)=\lim\limits_{\delta\to0}\lim\limits_{n\to\infty}E(u_n,D_{\delta})\geq \epsilon_N$, there exists  $\delta>0$ so that
\beno
\ |E(u_n,D_{\delta}) -m_0(0)|\leq\dfrac{\epsilon}{4}
\eeno
for $n$ sufficiently large. Let $Q_n(t)\triangleq\sup\limits_{D(z,t)\subseteq D_{\delta}}E(u_n,D(z,t))$ for $0\leq t\leq {\delta}$. Then $Q_n(t)$
is continuous and non-decreasing in $t, Q_n(0)=0$ and $Q_n(\delta)=E(u_n,D_{\delta})$.
Therefore, there exists $0<r_n^1<\delta$ such that $Q_n(r_n^1)=\max\big(Q_n(\delta)-\epsilon,\frac{\epsilon}{2}\big)$ and there exists $D(x_n^1, r_n^1)\subseteq D_{\delta}$ such that $E(u_n,D(x_n^1, r_n^1))=Q_n(r_n^1)$. Thus, we have  $x_n^1\to0,\ r_n^1\to0$ and
\begin{align*}
E\big(u_n,D(x_n^1,{\delta})\backslash D(x_n^1,r_n^{1})\big)\leq& E(u_n,D_{\delta})- E(u_n,D(x_n^1,r_n^{1}))\\
=&Q_n(\delta)-Q_n(r_n^1)\leq\epsilon
\end{align*}
for $n$ sufficiently large. Hence, we can take $f(1)=0$.
Moreover, by Lemma \ref{lem:small energy}, $u_n(x_n^1+r_n^1x)$ has a subsequence, which  strongly converges in $W^{1,2}_{loc}(\R^2\setminus Z_1,N)$ to $ w^1,$ and
for $x\in Z_1$,
\beno
m_1(x)\leq \limsup\limits_{n\to\infty}Q_n(r_n^1)\leq \max\big(m_0(0)-\frac{3}{4}\epsilon,\frac{\epsilon}{2}\big),
\eeno
which implies that this construction can only happen finite times.

\begin{Remark}
In fact, Zhu \cite{Zhu2} proved the bubble tree theorem under the weaker condition
\beno
\|u_n\|_{W^{1,2}(D_1)}+\|\tau(u_n)\|_{L\ln^+L(D_1)}\leq \Lambda,
\eeno
where $\|f\|_{L\ln^+L(D_1)}\eqdef\int_{D_1}|f(x)|\ln(2+|f(x)|)dx$.
\end{Remark}

Now, by Property 1 and Lemma \ref{lem:small energy}, for fixed $0<\delta<1<R$ and $1\leq i\leq k_0$, we have
\beno
&& u_n(x_n^i+r_n^ix)\to w^i \ \text{strongly in}\ W^{1,2}(D_R\setminus\cup_{x\in Z_i}D(x,\delta))\cap C^0(D_R\setminus\cup_{x\in Z_i}D(x,\delta)),\\
&& u_n\to u\ \text{strongly in}\ W^{1,2}(D_1\setminus D_{\delta})\cap C^0(D_{\f 12}\setminus D_{\delta}),
\eeno
as $n\to \infty$. Therefore, as $n\to \infty$, \beno
&&E(u_n,D(x_n^i,r_n^iR)\setminus\cup_{x\in Z_i}D(x_n^i+r_n^ix,r_n^i\delta))\to E(w^i,D_R\setminus\cup_{x\in Z_i}D(x,\delta)),\\
&&E(u_n,D_1\setminus D_{\delta})\to E(u,D_1\setminus D_{\delta}).
\eeno
Moreover,
$$\limsup\limits_{n\rightarrow \infty}
\textrm{osc}\ \big(u_n, D(x^i_n,r_n^{f(i)}\delta)\setminus D(x^i_n,r_n^{i}R)\big)\geq \textrm{diam}(w^{f(i)}(\partial D_{\delta})\cup w^{i}(\partial D_{R})),$$
and for $n$ sufficiently large,
\beno
&&\Big|E(u_n,D_1)-E(u_n,D_1\setminus D_{\delta})-\sum\limits_{i=1}^{k_0}E(u_n,D(x_n^i,r_n^iR)\setminus\cup_{x\in Z_i}D(x_n^i+r_n^ix,r_n^i\delta))\Big|\\
&&\leq\sum\limits_{i=1}^{k_0}E\big(u_n, D(x^i_n,r_n^{f(i)}\delta)\setminus D(x^i_n,r_n^{i}R)\big).
\eeno
Thus, the energy identity is equivalent to show that  there is no energy on the neck during blow-up process, i.e.,
\ben\label{eq:energy-lim}
\lim_{\delta\rightarrow 0}\lim_{R\rightarrow \infty}\limsup_{n\rightarrow \infty}
E\big(u_n, D(x^i_n,r_n^{f(i)}\delta)\setminus D(x^i_n,r_n^iR)\big)=0\quad
\text{for } i=1,\cdots k_0.
\een
While, the neckless property is equivalent to show  that there is no osicillation on the neck, i.e.,
\ben\label{eq:neckless}
\lim_{\delta\rightarrow 0}\lim_{R\rightarrow \infty}\limsup_{n\rightarrow \infty}
\textrm{osc}\big(u_n, D(x^i_n,r_n^{f(i)}\delta)\setminus D(x^i_n,r_n^{i}R)\big)=0\quad
\text{for } i=1,\cdots k_0.
\een

In order to prove (\ref{eq:energy-lim}) and (\ref{eq:neckless}), our key idea is to show that the Hopf differential of the approximate harmonic map $u$ can be approximated by a holomorphic function, where the error is quantized by the tension field of $u$. The result is trivial in the case when $\tau(u)=0$, because the Hopf differential of harmonic map $u$ is holomorphic.

\setcounter{equation}{0}
\section{The Coulomb gauge frame}

Consider $\Omega=D_1$ or $D_1\backslash D_{r}$ for $0<r\leq \frac{1}{4}$.  Assume that
\ben
E(u,\Omega)\leq \epsilon_0,
\een
 where $\epsilon_0>0$ will be determined later.

Recall that $(N,h)$ can be isometrically embedded into $\R^k$.
Let $\overline{N}$ be a submanifold of $\R^{2k}$ defined by
\beno
\overline{N}\eqdef \big\{(y,y')\in\R^k\times\R^k: y\in N,\ y'\perp T_yN\big\}.
\eeno
Then $N=N\times\{0\}$ is a totally geodesic submanifold
of $\overline{N}. $ As in \cite{Helein, LW-book}, we may introduce the Coulomb gauge frame of $u^*T\overline{N}$. Let us present the construction of Coulomb gauge.\smallskip

The following lemma makes use of Hardy space (for example, see \cite{Helein, LW-book}).

\begin{Lemma}\label{Hardy}If $\triangle\Phi=\dfrac{\partial f}{\partial x_1}\dfrac{\partial g}{\partial x_2}-\dfrac{\partial f}{\partial x_2}\dfrac{\partial g}{\partial x_1},$ then we have
$$\left\|d\Phi\right\|_{L^{2,1}(\R^2)}\leq C\left\|df\right\|_{L^{2}(\R^2)}\left\|dg\right\|_{L^{2}(\R^2)}.$$
Here $L^{p,q}(\R^2)$ is the Lorentz space.
\end{Lemma}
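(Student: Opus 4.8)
The plan is to recognize the right-hand side as a Jacobian with a \emph{div--curl} structure and to run the standard Hardy-space argument. Writing $\nabla^\perp g=(-\partial_{x_2}g,\partial_{x_1}g)$, the right-hand side equals $\nabla f\cdot\nabla^\perp g=\mathrm{div}\big(f\,\nabla^\perp g\big)$, i.e.\ the scalar product of a curl-free $L^2$ field, $\nabla f$, with a divergence-free $L^2$ field, $\nabla^\perp g$. The first and essential step is the Coifman--Lions--Meyer--Semmes estimate: this Jacobian belongs to the real Hardy space $\cH^1(\R^2)$, with
\[
\|J\|_{\cH^1(\R^2)}\le C\|df\|_{L^2(\R^2)}\|dg\|_{L^2(\R^2)},\qquad J:=\partial_{x_1}f\,\partial_{x_2}g-\partial_{x_2}f\,\partial_{x_1}g.
\]
One can quote this directly (see \cite{Helein,LW-book}); for a self-contained proof one fixes a mollifier $\varphi$, integrates by parts to write $\varphi_t*\mathrm{div}(f\nabla^\perp g)=(\nabla\varphi_t)*(f\nabla^\perp g)$, subtracts the mean of $f$ over the ball of radius $t$ (permissible since $\nabla^\perp g$ is divergence free), and bounds the outcome pointwise by $C\,M(df)(x)\,M(dg)(x)$; Cauchy--Schwarz together with the $L^2$-boundedness of the Hardy--Littlewood maximal operator $M$ then puts the grand maximal function of $J$ in $L^1$.

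The second step turns this into the claimed gradient estimate. Take $\Phi$ to be the Newtonian potential of the right-hand side (two solutions of the Poisson equation differ by a harmonic function, which is the normalization implicit in the statement); then $\widehat{d\Phi}(\xi)=-i\,\xi\,|\xi|^{-2}\,\widehat J(\xi)$, so that $d\Phi$ equals, up to constants, the vector of Riesz transforms applied to the Riesz potential $I_1J$ of order one. The Riesz transforms are bounded on $\cH^1(\R^2)$, and $I_1$ maps $\cH^1(\R^2)$ into the Lorentz space $L^{2,1}(\R^2)$ --- the endpoint refinement of the Hardy--Littlewood--Sobolev inequality, see \cite{Helein,LW-book}. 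Applying the Riesz transforms first (in $\cH^1$) and then $I_1$, we get
\[
\|d\Phi\|_{L^{2,1}(\R^2)}\le C\|J\|_{\cH^1(\R^2)}\le C\|df\|_{L^2(\R^2)}\|dg\|_{L^2(\R^2)},
\]
as claimed; equivalently one may apply $I_1$ first, landing in $L^{2,1}$, and then use that the Riesz transforms are bounded on $L^{2,1}$ since $1<2<\infty$.

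The main difficulty lies entirely in the first step: the product of two $L^2$ functions is in general no better than $L^1$, and one cannot improve on this without the null-form structure $\nabla f\cdot\nabla^\perp g$; it is precisely this cancellation (the source of Wente's inequality) that lifts the integrability from $L^1$ to $\cH^1$, and $\cH^1$ --- rather than merely $L^1$ --- is exactly what forces $d\Phi\in L^{2,1}$ instead of only $L^{2,\infty}$. Granting the $\cH^1$ bound, the rest is soft, using only classical Calder\'on--Zygmund theory and the sharp Sobolev embedding on the Hardy space; the resulting inequality is the sharp $L^{2,1}$ form of Wente's estimate, which is what makes it effective (via the $L^{2,1}$--$L^{2,\infty}$ duality) in the neck analysis carried out later in the paper.
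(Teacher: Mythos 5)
The paper does not prove this lemma itself; it cites H\'elein and Lin--Wang, and the argument in those references is exactly the one you give: CLMS compensated-compactness puts the Jacobian $J=\partial_1 f\,\partial_2 g-\partial_2 f\,\partial_1 g$ in $\cH^1(\R^2)$ (after noting the div--curl structure $\nabla f\cdot\nabla^\perp g$), and then the gradient of the Newtonian potential of an $\cH^1(\R^2)$ function lands in $L^{2,1}(\R^2)$ via the sharp Hardy-space Sobolev embedding together with $L^{p,q}$-boundedness of the Riesz transforms. So your route coincides with the paper's (i.e.\ with the cited references), and the proof is correct.

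One technical point in your optional self-contained sketch of the CLMS step: the pointwise bound is not literally $|\varphi_t * J(x)|\le C\,M(df)(x)\,M(dg)(x)$. Running the mean-subtraction, Poincar\'e--Sobolev, and H\"older argument actually yields a bound of the form $C\bigl(M(|df|^p)(x)\bigr)^{1/p}\bigl(M(|dg|^{q}) (x)\bigr)^{1/q}$ for suitable $p,q<2$ (for instance $p=q=4/3$); one then uses that $M$ is bounded on $L^{2/p}$ and $L^{2/q}$ (which requires $p,q<2$, and is exactly why the exponents must be strictly less than $2$) together with Cauchy--Schwarz. The version with $M(df)\cdot M(dg)$ as written would only control the maximal function of $J$ in $L^1$ after the same kind of exponent adjustment, so the phrasing is imprecise but the mechanism and conclusion you state are right.
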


We also need the following extension lemma.

\begin{Lemma}
Let $f\in W^{1,2}(\Om)$. Then
then there exists an extension $\bar f\in \dot W^{1,2}(\R^2)$ of $f$  so that
\beno
E(\bar f,\R^2)\le CE(f,\Omega).
\eeno
Here $C$ is a constant independent of $r$.
\end{Lemma}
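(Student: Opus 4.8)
The plan is to reduce the annular case to the disk case and then the disk case to the whole plane, exploiting at every step that the Dirichlet integral $\int|\nabla\cdot|^2$ is conformally invariant in dimension two. First consider $\Omega=D_1$. Set $\bar f(x)=f(x)$ for $|x|\le 1$ and $\bar f(x)=f\big(x/|x|^2\big)$ for $|x|\ge 1$. The Kelvin transform $x\mapsto x/|x|^2$ is a conformal diffeomorphism of $\{|x|>1\}$ onto $D_1\setminus\{0\}$ fixing $\partial D_1$ pointwise, so the two pieces have matching traces on $\partial D_1$, whence $\bar f\in W^{1,2}_{loc}(\R^2)$; and conformal invariance gives $\int_{|x|>1}|\nabla\bar f|^2=\int_{D_1}|\nabla f|^2$, i.e. $E(\bar f,\R^2)=2E(f,D_1)$, so in particular $\bar f\in\dot W^{1,2}(\R^2)$.

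Now let $\Omega=D_1\setminus D_r$ with $0<r\le\frac14$, so that the fixed-shape collar $A\eqdef D_{2r}\setminus D_r$ is contained in $\Omega$. Fix once and for all $\chi\in C^\infty([0,\infty);[0,1])$ with $\chi\equiv0$ on $[0,\frac12]$ and $\chi\equiv1$ on $[1,\infty)$, and let $c$ be the mean of $f$ over $A$. Write $\iota(x)=r^2x/|x|^2$ and define $g$ on $D_r$ by $g(x)=\chi(|x|/r)\,f(\iota(x))+\big(1-\chi(|x|/r)\big)c$ for $\frac r2\le|x|\le r$, and $g\equiv c$ on $D_{r/2}$. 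For $\frac r2\le|x|\le r$ one has $\iota(x)\in A$, so $g$ is well defined, $g\equiv c$ near $\partial D_{r/2}$, and $g=f$ on $\partial D_r$; hence $\tilde f\eqdef f$ on $\Omega$, $\tilde f\eqdef g$ on $D_r$, is a $W^{1,2}(D_1)$ extension of $f$, and applying the disk case to $\tilde f$ finishes the proof.

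The quantitative heart of the argument — and the step where $r$-independence of $C$ must genuinely be checked — is the estimate $E(g,D_r)\le C\,E(f,A)$ with $C$ absolute. Differentiating the formula for $g$, on $D_r\setminus D_{r/2}$ one gets $|\nabla g|\le \frac{C}{r}\,|f(\iota(x))-c|+C\,|\nabla(f\circ\iota)(x)|$. The second term is handled by the two-dimensional conformal invariance of the Dirichlet integral together with $\iota(D_r\setminus D_{r/2})=A$, giving $\int_{D_r\setminus D_{r/2}}|\nabla(f\circ\iota)|^2=\int_A|\nabla f|^2$. For the first term, the Jacobian of $\iota$ is comparable to $1$ on this annulus, so $\frac{1}{r^2}\int_{D_r\setminus D_{r/2}}|f\circ\iota-c|^2\le \frac{C}{r^2}\|f-c\|_{L^2(A)}^2$, and since $c$ is the mean of $f$ on $A$ the Poincaré inequality on $A$ — which, rescaled by $x\mapsto x/r$, is the Poincaré inequality on the fixed annulus $D_2\setminus D_1$, hence has an $r$-independent constant — bounds this by $C\|\nabla f\|_{L^2(A)}^2$. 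Summing the two contributions yields $E(g,D_r)\le C\,E(f,A)\le C\,E(f,\Omega)$ with $C$ independent of $r$, which is exactly the required bound; the gluing and the final inversion contribute only absolute multiplicative constants. The main obstacle is thus not any single hard estimate but the discipline of using only a collar of fixed aspect ratio near $\partial D_r$ and relying solely on scale-invariant inequalities, so that no factor of $\log(1/r)$ or power of $r$ sneaks into $C$.
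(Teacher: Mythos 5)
Your argument is correct, and it takes a genuinely different route from the paper's. The paper invokes (twice) the existence of a bounded Sobolev extension operator on the fixed-shape annulus $D_1\setminus D_{1/2}$: it produces $f_1,f_2\in\dot W^{1,2}(\R^2)$ with $f_1=f$ and $f_2=f(2r\cdot)$ on $D_1\setminus D_{1/2}$ and $E(f_j,\R^2)\le C\,E(f_j,D_1\setminus D_{1/2})$, then glues $f_1$ (outside $D_1$), $f$ itself (on the middle annulus), and $f_2(\cdot/(2r))$ (inside $D_r$), using scale invariance of the Dirichlet energy to keep the constant $r$-independent. You instead build the extension by hand: a Kelvin transform handles the disk case with the exact identity $E(\bar f,\R^2)=2E(f,D_1)$, and the inward extension across $\partial D_r$ is constructed explicitly as a cutoff interpolation between $f\circ\iota$ (with $\iota$ the inversion in $\partial D_r$) and the mean value $c$ of $f$ over the collar $A=D_{2r}\setminus D_r$, with the $r$-independence of the constant coming from conformal invariance of $\int|\nabla\cdot|^2$ under $\iota$ and the scale-invariant Poincaré inequality on $A$. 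Your quantitative estimate $E(g,D_r)\le C\,E(f,A)$, including the pointwise gradient bound and the Jacobian comparison, checks out. The trade-off: the paper's proof is shorter but delegates the uniformity of the constant to a black-box extension theorem on a fixed domain; yours is a bit longer but fully self-contained, using only inversion, a cutoff, and Poincaré, which makes the source of the $r$-independence completely transparent.
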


\begin{proof}
We only consider the case $\Omega=D_1\backslash D_{r}$.  First of all, we can find $f_j\in \dot{W}^{1,2}(\R^2),\ j=1,2$ such that $E(f_j,\R^2)\leq CE(f_j,D_1\backslash D_{\frac{1}{2}})$, and $f_1(z)=f(z),\ \ f_2(z)=f(2rz)$ in $D_1\backslash D_{\frac{1}{2}}$. Then we can extend $f$ to $\dot{W}^{1,2}(\R^2)$ by taking $\bar f(z)=f_1(z)$ for $|z|>1$ and $\bar f(z)=f_2(z/2r)$ for $|z|<r$. Then we find that
\begin{align*}
E(\bar f,\R^2)&=E(\bar f,D_{2r})+E(\bar f,D_{\frac{1}{2}}\backslash D_{2r})+E(\bar f,\R^2\backslash D_{\frac{1}{2}})\\
&=E(f_2,D_{1})+E(f,D_{\frac{1}{2}}\backslash D_{2r})+E(f_1,\R^2\backslash D_{\frac{1}{2}})\\
&\leq CE(f_2,D_1\backslash D_{\frac{1}{2}})+E(f,D_{\frac{1}{2}}\backslash D_{2r})+CE(f_1,D_1\backslash D_{\frac{1}{2}})\\
&=CE(f,D_{2r}\backslash D_{r})+E(f,D_{\frac{1}{2}}\backslash D_{2r})+CE(f,D_1\backslash D_{\frac{1}{2}})\\
&\leq CE(f,D_1\backslash D_r).
\end{align*}
This completes the proof.
\end{proof}

Let  $\mathcal{A}(\Omega)$ be the set of  $R=(e_1,\cdots,e_k)\in \dot{W}^{1,2}(\R^2,\R^{2k\times k})$ such that
$$R^TR=I_k,\ RR^T=\left(\begin{array}{cc}P(u) &  \\ & I_k-P(u)  \end{array}\right)\ \ \ \ \text{in}\ \Omega.
$$

First of all,  we show that $\mathcal{A}(\Omega)$ is nonempty and $E(R)=\frac{1}{2}\int_{\R^2}|dR|^2$ attains a minimum in $\mathcal{A}(\Omega)$.

Indeed, let $R_0=\left(\begin{array}{c}P(u)   \\  I_k-P(u)  \end{array}\right)$ in $\Omega$ and extend $R_0$ to $\R^2$
 so that $\int_{\R^2}|dR_0|^2\leq C\int_{\Omega}|dR_0|^2$.
 Then $R_0\in \mathcal{A}(\Omega)$ and $E(R_0)\leq CE(u,\Omega)$. Let $R_n\in \mathcal{A}(\Omega)$ so that
 \beno
 \lim\limits_{n\to\infty}E(R_n)=E_0=\inf\limits_{R\in \mathcal{A}(\Omega)} E(R).
 \eeno
 Then there exists a subsequence of $\{R_n\}$ (still denoted by $\{R_n\}$) and $R\in\dot{W}^{1,2}(\R^2,\R^{2k\times k})$ such that $R_n\to R$ strongly in $L^2(\Omega)$ as $n\to\infty$, and $dR_n\rightharpoonup dR$ weakly in $L^2(\R^2)$ as $n\to\infty$. Then $R_n^TR_n\to R^TR$ and $R_nR_n^T\to RR^T$ strongly in $L^1(\Omega)$ as $n\to\infty$, and $E(R)\leq E_0$. Therefore, $R\in \mathcal{A}(\Omega)$ and $E(R)=E_0\leq E(R_0)\leq CE(u,\Omega)$. \smallskip

 Now, for $ \psi\in C_0^{\infty}(\R^2,so(k))$, we have $R\exp{t\psi}\in \mathcal{A}(\Omega)$ for $t\in\R$. Therefore, $E(R\exp{t\psi})\geq E_0=E(R)$ and
 \beno0=\left.\frac{d}{dt}\right|_{t=0}E(R\exp{t\psi})=\int_{\R^2}\left\langle dR,\left.\frac{d}{dt}\right|_{t=0}d(R\exp{t\psi})\right\rangle=\int_{\R^2}\left\langle dR,d(R{\psi})\right\rangle\\=\int_{\R^2}(\left\langle dR,R(d{\psi})\right\rangle+\left\langle dR,(dR){\psi}\right\rangle)=\int_{\R^2}\left\langle dR,R(d{\psi})\right\rangle=\int_{\R^2}\left\langle R^TdR,d{\psi}\right\rangle,\eeno as $\left\langle dR,(dR){\psi}\right\rangle=0$. 
 For $ \psi\in C_0^{\infty}(\R^2,R^{k\times k})$, we have $ \psi^T-\psi\in C_0^{\infty}(\R^2,so(k))$. So,
\begin{align*}
0=\int_{\R^2}\left\langle R^TdR,d({\psi^T-\psi})\right\rangle=&\int_{\R^2}\left\langle (R^TdR)^T-R^TdR,d{\psi}\right\rangle\\
=&\int_{\R^2}\left\langle (dR^T)R-R^TdR,d{\psi}\right\rangle.
\end{align*}
Therefore, $d^*((dR^T)R-R^TdR)=0$ and there exists $\Phi\in\dot{W}^{1,2}(\R^2,\R^{k\times k})$ so that $$\frac{1}{2}((dR^T)R-R^TdR)=\frac{\partial\Phi}{\partial x_2}dx_1-\frac{\partial\Phi}{\partial x_1}dx_2.$$
Noticing that
$$
\triangle\Phi=\dfrac{\partial R^T}{\partial x_1}\dfrac{\partial R}{\partial x_2}-\dfrac{\partial R^T}{\partial x_2}\dfrac{\partial R}{\partial x_1},
$$
we infer from Lemma \ref{Hardy}  that
$$\left\|d\Phi\right\|_{L^{2,1}(\R^2)}\leq C\left\|dR^T\right\|_{L^{2}(\R^2)}\left\|dR\right\|_{L^{2}(\R^2)}= C\left\|dR\right\|_{L^{2}(\R^2)}^2\leq CE(u,\Omega).$$
Thanks to $0=dI_k=d(R^TR)=(dR^T)R+R^TdR$  in $\Omega$, we have
\beno
&&(dR^T)R=\frac{\partial\Phi}{\partial x_2}dx_1-\frac{\partial\Phi}{\partial x_1}dx_2,\ d^*((dR^T)R)=0\ \text{in} \ \Omega,\\
&&\left\|(dR^T)R\right\|_{L^{2,1}(\Omega)}=\left\|d\Phi\right\|_{L^{2,1}(\Omega)}\leq CE(u,\Omega).
\eeno

We introduce
\beno
A=R^T\left(\begin{array}{c}\frac{\partial u}{\partial z}   \\  0  \end{array}\right),\quad \tau^{1}=\f 14R^T\left(\begin{array}{c}\tau   \\  0  \end{array}\right),\ w=\frac{\partial R^T  }{\partial \bar{z}}R\in so(k)\otimes \mathbb{C}\ \ \text{for}\ z\in\Omega.
\eeno
 Then the system of (\ref{eq:u}) is equivalent to
 \ben
 \frac{\partial A}{\partial \bar{z}}=wA+\tau^{1},
 \een
 where $w$ satisfies
\beno
\|w\|_{L^{2,1}(\R^2)}\leq C\epsilon_0.
\eeno

Let us introduce a linear operator $T:L^{\infty}(\mathbb{C}, \mathbb{C}^{k\times k})\rightarrow L^{\infty}(\mathbb{C}, \mathbb{C}^{k\times k})$ defined by
\beno
T(B)(z)=\Big((\frac{1}{\pi z})*(wB)\Big)(z)=\int_{\mathbb{C}}\frac{w(\zeta)B(\zeta)}{\pi(z-\zeta)}d\zeta.
\eeno
Thanks to  $\frac{1}{\pi z}\in L^{2,\infty}(\mathbb{C})$ and $w\in L^{2,1}(\mathbb{C})$, we deduce that
$T$ maps $L^{\infty}(\mathbb{C}, \mathbb{C}^{k\times k})$ continuously to itself with the bound
\ben\label{eq:T-bound}
\|T\|\leq C\|w\|_{L^{2,1}(\R^2)}\le C\|\na u\|_{L^{2}(\Omega)}^{2}.
\een
Moreover,  it holds that
\ben
\frac{\partial }{\partial \bar{z}}T(B)=wB.
\een

\begin{Lemma}\label{lem:B}
If $\|T\|\le \f13$, then there exists a nonsingular matrix $B\in L^{\infty}(\mathbb{C}, \mathbb{C}^{k\times k})$ so that
\beno
B-T(B)=I_k,\quad B^TB=I_k.
\eeno
Here $I_k$ is the $k\times k$ identity matrix.
\end{Lemma}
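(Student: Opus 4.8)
The plan is to produce $B$ by summing a Neumann series for the linear equation $B-T(B)=I_k$, and then to upgrade this solution to an orthogonal-matrix-valued map by combining the skew-symmetry of $w$ with Liouville's theorem.

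First, since $\|T\|\le\frac13<1$ on $L^{\infty}(\mathbb{C},\mathbb{C}^{k\times k})$, the operator $\mathrm{Id}-T$ is invertible there, so
$$B\eqdef\sum_{j\ge0}T^{j}(I_k)\in L^{\infty}(\mathbb{C},\mathbb{C}^{k\times k})$$
is well defined and solves $B-T(B)=I_k$, with $\|B\|_{L^{\infty}}\le\frac32$ and $\|B-I_k\|_{L^{\infty}}=\|T(B)\|_{L^{\infty}}\le\frac12$. In particular $B(z)$ is an invertible matrix for a.e.\ $z$, so $B$ is already nonsingular; it remains only to verify $B^{T}B=I_k$.

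For this I use the identity $\frac{\partial}{\partial\bar z}T(B)=wB$, so that $\frac{\partial B}{\partial\bar z}=wB$ and $\frac{\partial B^{T}}{\partial\bar z}=B^{T}w^{T}$. Since $w$ takes values in $so(k)\otimes\mathbb{C}$, i.e.\ $w^{T}=-w$, we get
$$\frac{\partial}{\partial\bar z}\big(B^{T}B\big)=B^{T}w^{T}B+B^{T}wB=B^{T}(w^{T}+w)B=0 .$$
Hence every entry of $B^{T}B$ is a bounded weakly holomorphic function on $\mathbb{C}$, therefore holomorphic, therefore constant by Liouville's theorem. To identify the constant I would show $T(B)(z)\to0$ as $|z|\to\infty$: writing $T(B)(z)=\frac1\pi\int_{\mathbb{C}}\frac{w(\zeta)B(\zeta)}{z-\zeta}\,d\zeta$ and fixing $\varepsilon>0$, choose $R_{0}$ with $\|w\|_{L^{2,1}(\mathbb{C}\setminus D_{R_{0}})}<\varepsilon$; the part of the integral over $\mathbb{C}\setminus D_{R_{0}}$ is bounded by $C\big\|\tfrac1z\big\|_{L^{2,\infty}}\|w\|_{L^{2,1}(\mathbb{C}\setminus D_{R_{0}})}\|B\|_{L^{\infty}}\le C\varepsilon$ by the Lorentz--H\"older inequality, while for $|z|>2R_{0}$ the part over $D_{R_{0}}$ is $\lesssim|z|^{-1}R_{0}\|w\|_{L^{2}(D_{R_{0}})}\|B\|_{L^{\infty}}\to0$ by Cauchy--Schwarz. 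Thus $\limsup_{|z|\to\infty}|B(z)-I_k|\le C\varepsilon$ for every $\varepsilon>0$, so $B(z)\to I_k$ at infinity, and the constant value of $B^{T}B$ equals $I_k$. This proves $B^{T}B=I_k$, and in particular $B^{-1}=B^{T}$.

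The one genuinely delicate point is this last step, the identification of the Liouville constant via the decay of $T(B)$ at infinity: a crude $L^{1}$ bound is unavailable because $L^{2,1}$ does not embed into $L^{1}$ on $\mathbb{R}^{2}$, so the convolution must be split at a large radius and controlled in the Lorentz scale near infinity, which is exactly where the structure $\tfrac1z\in L^{2,\infty}$, $w\in L^{2,1}$ is used. The Neumann series and the $\frac{\partial}{\partial\bar z}$-computation are routine once the skew-symmetry $w^{T}=-w$ has been recorded.
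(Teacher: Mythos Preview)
Your proof is correct and follows essentially the same route as the paper: Neumann series/contraction to solve $B-T(B)=I_k$, then the skew-symmetry $w^{T}=-w$ to show $B^{T}B$ is entire, and finally $B\to I_k$ at infinity to identify the Liouville constant. The paper simply asserts $\lim_{z\to\infty}B=I_k$ without proof, whereas you supply the splitting argument for the decay of $T(B)$; this extra detail is sound and is exactly the justification the paper omits.
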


\begin{proof}

Due to $\|T\|\le \f13$, we get by the fixed point theorem that
\beno
B-T(B)=I_k
\eeno
has a unique solution $B\in L^{\infty}(\mathbb{C}, \mathbb{C}^{k\times k})$  with
\beno
\|B-I_k\|_{L^\infty(\mathbb{C})}\leq \frac32\|T\|\le\frac12.
\eeno
Recall that $w^T=-w$ and $\frac{\partial }{\partial \bar{z}}B=\frac{\partial }{\partial \bar{z}}T(B)=wB$. Thus, we get
\beno
\frac{\partial}{\partial \bar{z}}B^T=B^Tw^T,
\eeno
which implies that
\beno
\frac{\partial}{\partial \bar{z}}(B^TB)=B^T(w^T+w)B=0.
\eeno
That means that $B^TB$ is holomorphic. On the other hand, $\lim\limits_{z\rightarrow\infty}B=I_k$. Then
\beno
B^TB=I_k.
\eeno
The proof is finished.
\end{proof}

\setcounter{equation}{0}
\section{Holomorphic approximation of Hopf differential}

Throughout this section, let us assume that $u$ is a mapping from $D_1$ to $N$ in $W^{1,2}(D_1,N)$ with the tension field $\tau(u)\in L^p(D_1)$ for some $p\in(1,2)$.  We denote by $h(z)$ the Hopf differential of $u$, i.e.,
\beno
h(z)\eqdef \big\langle\frac{\partial u}{\partial z},\frac{\partial u}{\partial z}\big\rangle=\sum_{j=1}^k\frac{\partial u^j}{\partial z}\frac{\partial u^j}{\partial z}.
\eeno

It was well-known that if $u$ is harmonic, then $h(z)$ is holomorphic.
In this section, we will show that
in general case, $h(z)$ can be approximated by a holomorphic function,
where the error is quantized by $\|\tau(u)\|_{L^p(D_1)}$. 
This result may be independent of interest.

\begin{Proposition}\label{prop:holomorphic approx}
Assume that $E(u,D_1)\leq m\epsilon$ for some $m\in \N$, where $\epsilon$ is the minimum of $\epsilon_0$ given by Lemma \ref{lem:app-small} and Lemma \ref{lem:app-small2}.
Then there exist $C_m$ and a holomorphic function $h_0$ in $D_{1/4}$ such that
\beno
\|h-h_0\|_{L^1(D_{1/4})}\leq C_m\|\tau(u)\|_{L^p(D_{1})}^{3^{1-m}}.
\eeno
\end{Proposition}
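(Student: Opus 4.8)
The strategy is to transplant the Hopf differential into the Coulomb gauge frame of Section~3, where the Cauchy--Riemann structure becomes almost exact, and then to run an iteration on dyadic annuli that improves the holomorphic approximation geometrically, the exponent $3^{1-m}$ reflecting a triadic (inner disk / middle annulus / outer annulus) decomposition used at each stage. First I would work on a scale where the small-energy hypothesis of Section~3 applies, say on $\Omega=D_{1/2}$ after the usual rescaling, and assemble the gauged quantities $A=R^T\binom{\partial u/\partial z}{0}$, $\tau^1=\tfrac14 R^T\binom{\tau}{0}$, $w=\partial_{\bar z}R^T\,R$, and the matrix $B$ from Lemma~\ref{lem:B}, so that $\partial_{\bar z}A=wA+\tau^1$ and $\|w\|_{L^{2,1}}\le C\epsilon_0$, $\|B-I_k\|_{L^\infty}\le\frac12$. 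The key algebraic observation is that $\langle A,A\rangle=\langle\partial u/\partial z,\partial u/\partial z\rangle=h$ (since $R^TR=I_k$ on the target block), and that $\tilde A\eqdef B A$ satisfies a cleaner equation: using $\partial_{\bar z}B=wB$ and $w^T=-w$ one computes $\partial_{\bar z}(B^{-T}\cdots)$—more precisely I would check that $\langle \tilde A,\tilde A\rangle_{B^{-T}B^{-1}}$ or, better, that the scalar $\tilde h\eqdef \tilde A^T\tilde A = A^T B^T B A = A^T A = h$ is preserved because $B^TB=I_k$, while $\partial_{\bar z}\tilde A = B(\partial_{\bar z}A)+(\partial_{\bar z}B)A = B(wA+\tau^1)+wBA$; the point is to arrange the conjugation so that the $w$-terms cancel and only the tension term survives, giving $\partial_{\bar z}(\text{something})=O(|\tau|)$ with no first-derivative-of-$u$ error. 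Consequently $\partial_{\bar z}h$ is, up to the gauge algebra, controlled pointwise by $|\tau|\,|\nabla u|$, hence in $L^s$ for some $s>1$ by Hölder (using $\tau\in L^p$, $\nabla u\in L^2$, $p<2$).

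Next, from $\partial_{\bar z}h=g$ with $\|g\|_{L^s(D_{1/2})}\le C\|\tau\|_{L^p}\,\|\nabla u\|_{L^2}$ for a suitable $s\in(1,2)$, I would solve the $\bar\partial$-problem: let $h_1=\frac{1}{\pi z}*(\chi g)$ on $D_{1/4}$ with $\chi$ a cutoff, so $\partial_{\bar z}(h-h_1)=0$ there, i.e.\ $h_0\eqdef h-h_1$ is holomorphic in $D_{1/4}$, and by the Calderón--Zygmund / Riesz-potential estimate $\|h_1\|_{L^1(D_{1/4})}\lesssim \|g\|_{L^s}\lesssim \|\tau\|_{L^p}\,\|\nabla u\|_{L^2(D_{1/2})}$. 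This already gives the statement when $m=1$ (the case $E(u,D_1)\le\epsilon$), since then $\|\nabla u\|_{L^2(D_{1/2})}\le C\sqrt\epsilon$ is absorbed into $C_1$ and the exponent is $3^{0}=1$. For general $m$ the energy on $D_{1/2}$ is only bounded by $\Lambda$, not small, so the gauge construction must instead be applied on a finite family of smaller scales covering $D_{1/4}$; I would use an induction on $m$: decompose (after a covering argument producing boundedly many points) the region into pieces on which either the energy has dropped below $(m-1)\epsilon$—where the inductive hypothesis yields a holomorphic approximant with error $\|\tau\|^{3^{2-m}}$ on a subdisk—or the energy is concentrated, and then glue these local holomorphic approximants. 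The gluing is where the geometric improvement of the exponent comes in: on the overlapping annulus between two scales, two holomorphic functions agree up to the $L^1$-errors coming from each scale, and a holomorphic function small in $L^1$ on an annulus is small (with a definite gain in the exponent, via subharmonicity of $\log$ or a three-circles estimate) on the concentric subdisk; matching the errors $\varepsilon^{a}$ (outer) and $\varepsilon^{b}$ (inner) across $k\sim$ constant scales is what turns $b=3^{2-m}$ into $a=3^{1-m}$.

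The main obstacle—and the technical heart of the paper—is precisely this last gluing/propagation step: showing that the local holomorphic approximations on different dyadic annuli are mutually consistent and can be patched into a single holomorphic $h_0$ on $D_{1/4}$ while only losing a controlled power of $\|\tau\|_{L^p}$. Two difficulties must be handled with care. First, the Coulomb frame $R$ (hence $B$, hence the ``good'' quantity whose $\bar\partial$ is small) depends on the annulus/scale, so when comparing the approximants on an overlap one must control the difference of two gauges; I expect this to be done by the uniqueness part of the minimization defining $\mathcal{A}(\Omega)$ together with the $L^{2,1}$ bound $\|(dR^T)R\|_{L^{2,1}(\Omega)}\le CE(u,\Omega)$, so that the discrepancy is again $O(E^{1/2})$ times $|\nabla u|$ and feeds back into the iteration. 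Second, one must make sure the constants $C_m$ do not blow up faster than the exponent $3^{1-m}$ shrinks—i.e.\ track the dependence through the induction—which forces the ``$m\epsilon$'' bookkeeping in the statement: each unit of energy $\epsilon$ buys one more level of the triadic iteration. I would organize the write-up so that Lemmas~\ref{lem:app-small} and~\ref{lem:app-small2} (referenced but not yet displayed) supply, respectively, the small-energy $\bar\partial$-estimate for $h$ in the gauge and the three-circles/annulus-propagation estimate for holomorphic functions, and then Proposition~\ref{prop:holomorphic approx} is a clean induction on $m$ combining the two.
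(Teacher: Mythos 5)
Your outline captures the right skeleton — Coulomb gauge to quantize $\bar\partial h$ by the tension field, induction on $m$ in units of $\epsilon$, and local-to-global assembly of holomorphic approximants — and you correctly flag the constant-tracking issue. Two details in the gauge step need fixing: the right reduction is $G \eqdef B^{-1}A$ (not $BA$), which gives the clean equation $\partial_{\bar z}G = B^{-1}\tau^1$, and then one writes $G=G_1+G_2$ with $G_1$ holomorphic rather than trying to show $\partial_{\bar z}h$ itself is small pointwise; since $B^TB=I_k$ one still gets $h=G^TG$, and $h_1=G_1^TG_1$ is the holomorphic candidate with $h-h_1 = G_2^TG + G_1^TG_2$ controlled by $\|\tau\|_{L^p}$. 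This is the content of Lemma~\ref{lem:app-small}, and your $m=1$ base case is essentially right.

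The genuine gap is in the inductive step, and it is exactly where you hedge. You propose that the exponent $3^{1-m}$ comes from a Hadamard three-circles type propagation between two scales, and that gauge-mismatch across scales is the hard point. Neither is what actually happens. The paper's key annulus lemma (Lemma~\ref{lem:app-small2}) works on $\Omega=D_1\setminus D_r$ with the inner approximant $h_{0,2r}$ supplied \emph{as a hypothesis} from the induction, so there is no two-gauge comparison; the hard part is that the holomorphic piece $G_1$ of the gauged quantity has a Laurent expansion with negative-power coefficients, and in particular $h_1=G_1^TG_1$ has a residue $b_{-1}$ whose contribution $|b_{-1}|\cdot\|z^{-1}\|_{L^1}$ cannot be controlled by the naive Cauchy estimate alone. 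The crucial refined bound is $|b_{-1}|\le C\min\{A_0/r,\,A_0^{1/2}+r^{1/2}\}$, obtained by a Pohozaev-type argument: one shows the $a_{-1}$ coefficient of $G_1$ has essentially real ``radial'' angular average, so $|\mathrm{Im}\,a_{-1}|$ is small, and then $|a_{-1}|^2\lesssim |b_{-2}|+\dots$ feeds into $|b_{-1}|\lesssim|a_{-1}|+r$. When $A_0$ is itself $\sim r^{\alpha\cdot3^{2-m}}$, the bound $\min\{A_0/r, A_0^{1/2}+r^{1/2}\}\le A_0^{1/2}+A_0^{1/3}$ is what degrades $3^{2-m}$ to $3^{1-m}$ — a cube-root loss, not a three-circles log-convexity gain, and it comes from the residue analysis rather than from matching errors between concentric circles. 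There is also a separate covering lemma (Lemma~\ref{lem:holo-exten}), proved by convolving $h$ with a mollifier at scale $r$ and solving a $\bar\partial$-problem for the defect, which glues boundedly many small-disk approximants with only a polynomial loss $(\rho/r)^3$; this is what handles the case when $Q(1/8)\le(m-1)\epsilon_0$, and it is elementary rather than three-circles. Without the residue estimate on $b_{-1}$ your induction does not close, because on the annulus $D_1\setminus D_r$ the naive Cauchy bound only gives $|b_{-1}|\lesssim A_0/r$, which is useless when $r$ is small.
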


The proof of Proposition \ref{prop:holomorphic approx} is based on the following lemmas.

\begin{Lemma}\label{lem:app-small}
There exists $\epsilon_0>0$ such that if $E(u,\Omega)\leq \epsilon_0$, $\Omega=D_1$ or $D_1\setminus D_r$ for some $0<r\leq\f 14$,
then there exists a holomorphic function $h_0$ in $\Omega$ so that
\beno
\|h-h_0\|_{L^1(\Omega)}\leq C\|\tau(u)\|_{L^p(\Omega)}.
\eeno
\end{Lemma}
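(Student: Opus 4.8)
\textbf{Proof proposal for Lemma \ref{lem:app-small}.}

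The plan is to work in the Coulomb gauge constructed in Section 3 and to use the change of gauge $B$ from Lemma \ref{lem:B} to straighten the equation $\partial_{\bar z}A = wA + \tau^1$ into one whose inhomogeneous term is controlled purely by the tension field. First I would choose $\epsilon_0$ small enough that $\|w\|_{L^{2,1}(\R^2)}\le C\epsilon_0$ forces $\|T\|\le 1/3$, so that $B$ exists, is bounded in $L^\infty(\C)$ with $\|B-I_k\|_{L^\infty}\le 1/2$, and satisfies $B^TB=I_k$; in particular $B$ is uniformly invertible. Set $\tilde A \eqdef B^{-1}A$ (or $B^T A$, using $B^TB = I_k$). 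Since $\partial_{\bar z}B = wB$, one computes $\partial_{\bar z}(B^{-1}) = -B^{-1}(\partial_{\bar z}B)B^{-1} = -B^{-1}w$, hence
\beno
\frac{\partial \tilde A}{\partial \bar z} = -B^{-1}wA + B^{-1}\frac{\partial A}{\partial\bar z} = -B^{-1}wA + B^{-1}(wA+\tau^1) = B^{-1}\tau^1 \quad\text{in }\Omega.
\eeno
Thus $\tilde A$ is ``almost holomorphic'': its $\bar\partial$-derivative is $B^{-1}\tau^1$, whose $L^p(\Omega)$ norm is $\lesssim \|\tau(u)\|_{L^p(\Omega)}$ because $B^{-1}$ and $R$ are bounded.

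Next I would split $\tilde A = \tilde A_{\mathrm{hol}} + \tilde A_{\mathrm{err}}$, where $\tilde A_{\mathrm{err}}$ solves $\partial_{\bar z}\tilde A_{\mathrm{err}} = B^{-1}\tau^1$ via the Cauchy transform (extending $B^{-1}\tau^1$ by zero outside $\Omega$, or using the extension lemma — note $\tau^1$ need only be extended in $L^p$, which is elementary), and $\tilde A_{\mathrm{hol}}$ is holomorphic in $\Omega$. By the standard $L^p\to W^{1,p}$ (hence $L^p\to L^q$ with $q = 2p/(2-p) > 2$, or at worst $L^p\to L^{p}$) bound for the Cauchy transform on a bounded region, $\|\tilde A_{\mathrm{err}}\|_{L^q(\Omega)} \le C\|\tau(u)\|_{L^p(\Omega)}$ for some $q\ge p>1$; in particular $\tilde A_{\mathrm{err}}\in L^2_{\mathrm{loc}}$ with that bound (this is where $p>1$ is used — at $p=1$ the Cauchy transform fails to gain integrability, consistent with Parker's counterexample). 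Then $A = B\tilde A = B\tilde A_{\mathrm{hol}} + B\tilde A_{\mathrm{err}}$, and I would estimate $\|B\tilde A_{\mathrm{err}}\|_{L^2(D_{1/4})}\le \|B\|_{L^\infty}\|\tilde A_{\mathrm{err}}\|_{L^2(D_{1/4})}\lesssim \|\tau(u)\|_{L^p(\Omega)}$.

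Finally I would translate this back to the Hopf differential. Since $R$ is an orthonormal frame, $\big\langle \partial_z u, \partial_z u\big\rangle = \big\langle R^T(\partial_z u,0)^T, R^T(\partial_z u,0)^T\big\rangle = \langle A, A\rangle$ (the $\R^k$-inner product extended bilinearly to $\C$), so $h = A^T A$. Writing $A = B\tilde A_{\mathrm{hol}} + B\tilde A_{\mathrm{err}}$ and using $B^TB = I_k$, we get $h = \tilde A_{\mathrm{hol}}^T\tilde A_{\mathrm{hol}} + (\text{cross terms} + \tilde A_{\mathrm{err}}^T B^T B\tilde A_{\mathrm{err}})$; set $h_0 \eqdef \tilde A_{\mathrm{hol}}^T\tilde A_{\mathrm{hol}}$, which is holomorphic since $\tilde A_{\mathrm{hol}}$ is. The cross terms are bounded in $L^1(D_{1/4})$ by $\|\tilde A_{\mathrm{hol}}\|_{L^2(D_{1/4})}\|\tilde A_{\mathrm{err}}\|_{L^2(D_{1/4})}$ via Cauchy–Schwarz, and $\|\tilde A_{\mathrm{hol}}\|_{L^2(D_{1/4})} \le \|\tilde A\|_{L^2(\Omega')} + \|\tilde A_{\mathrm{err}}\|_{L^2(\Omega')}\le C(E(u,\Omega)^{1/2} + \|\tau\|_{L^p}) \le C$ on a slightly larger region $\Omega'$ by interior estimates for holomorphic functions, so the cross terms and the error-squared term are both $\lesssim \|\tau(u)\|_{L^p(\Omega)}$. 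This gives $\|h - h_0\|_{L^1(D_{1/4})}\le C\|\tau(u)\|_{L^p(\Omega)}$ as claimed.

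The main obstacle I anticipate is making the decomposition $\tilde A = \tilde A_{\mathrm{hol}}+\tilde A_{\mathrm{err}}$ clean on the annular domain $\Omega = D_1\setminus D_r$ \emph{with constants independent of $r$}: the extension of $B^{-1}\tau^1$ and the subtraction of the Cauchy transform must be done so that the resulting holomorphic remainder is controlled on $D_{1/4}$ uniformly in $r$, and one must be careful that "holomorphic in $\Omega$" is the right statement (the Cauchy transform of a compactly supported $L^p$ function is holomorphic off the support, so $\tilde A - (\text{Cauchy transform})$ is holomorphic precisely where the extended datum vanishes, i.e. in $\Omega$ provided the extension agrees with $B^{-1}\tau^1$ there) — so some care with where the extension is supported is needed. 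A secondary technical point is verifying the Cauchy-transform mapping property $L^p(\text{bounded set})\to L^2_{\mathrm{loc}}$ with the right dependence, but this is classical (Calderón–Zygmund / the Hardy–Littlewood–Sobolev inequality for the kernel $1/z \in L^{2,\infty}$).
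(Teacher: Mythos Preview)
Your approach is essentially identical to the paper's: your $\tilde A$ is the paper's $G=B^{-1}A$, your splitting $\tilde A_{\mathrm{hol}}+\tilde A_{\mathrm{err}}$ is their $G_1+G_2$ with $G_2=\frac{1}{\pi z}*(B^{-1}\tau^1)$, and your $h_0=\tilde A_{\mathrm{hol}}^T\tilde A_{\mathrm{hol}}$ is their $h_1=G_1^TG_1$, using $B^TB=I_k$ to get $h=G^TG$.

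Two small corrections. First, the conclusion is on $\Omega$, not $D_{1/4}$; you seem to have conflated this lemma with Proposition~\ref{prop:holomorphic approx}. Your own argument already gives control on all of $\Omega$: $G_2\in L^{2p/(2-p)}(\R^2)$, hence $L^2(\Omega)$ by H\"older on a bounded set, and $\|G_1\|_{L^2(\Omega)}\le \|G\|_{L^2(\Omega)}+\|G_2\|_{L^2(\Omega)}$ directly---no interior estimate for holomorphic functions is needed. Second, your final line ``$\le C(E^{1/2}+\|\tau\|_{L^p})\le C$'' tacitly assumes $\|\tau(u)\|_{L^p(\Omega)}$ is bounded; without that, the cross and square terms only give $\|h-h_1\|_{L^1(\Omega)}\le C\|\tau\|_{L^p}(1+\|\tau\|_{L^p})$. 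The paper closes this by observing $\|h\|_{L^1(\Omega)}\le E(u,\Omega)\le\epsilon_0\le 1$ and taking $h_0=0$ when $\|\tau\|_{L^p}\ge 1$, so that $\min\{\|h-h_1\|_{L^1},\|h\|_{L^1}\}\le C\|\tau\|_{L^p}$.
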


\begin{proof}
Thanks to $E(u,\Omega)\leq \epsilon_0$, by (\ref{eq:T-bound}) we can take $\epsilon_0$ small enough so that
$\|T\|\le C\epsilon_0^2\le \f13$. Then by Lemma \ref{lem:B}, there exists a nonsingular matrix $B\in L^{\infty}(\mathbb{C}, \mathbb{C}^{k\times k})$ so that
\beno
B-T(B)=I_k,\quad B^TB=I_k,\quad \|B-I_k\|_{L^\infty(\mathbb{C})}\leq \frac32\|T\|\le C\epsilon_0^2.
\eeno
Let $A=BG$. Then $\frac{\partial}{\partial \bar{z}}G=B^{-1}\tau^{1}$. We write $G=G_1+G_2$ with $G_2=\frac{1}{\pi z}*(B^{-1}\tau^{1})$.
Then it holds that
\beno
\frac{\partial G_2}{\partial \bar{z}}=B^{-1}\tau^1,\quad\frac{\partial G_1}{\partial \bar{z}}=0\quad\text{in } \Omega.
\eeno
Moreover,  it holds that
\beno
\|G_2\|_{L^{\frac{2p}{2-p}}(\R^2)}\leq C\|\tau^1\|_{L^{p}(\R^2)}\leq C\|\tau(u)\|_{L^{p}(\Omega)}.
\eeno
Let $h_1(z)=G_1(z)^TG_1(z)$, which is holomorphic in $\Omega$. Notice that
\beno
h(z)=A^TA=G^TB^TBG=G^TG.
\eeno
Hence, by $G=B^{-1}A$ and $\|A\|_{L^2(\Omega)}\leq C\|\nabla u\|_{L^2(\Omega)}$, we deduce that
\begin{align*}
\|h-h_1\|_{L^p(\Omega)}=&\|G^TG-G_1^TG_1\|_{L^p(\Omega)}\\
\leq& \|G_2\|_{L^{\frac{2p}{2-p}}(\Omega)}(\|G\|_{L^2(\Omega)}+2\|G_2\|_{L^2(\Omega)})\\
\leq& C\|\tau(u)\|_{L^p(\Omega)}(\|\nabla u\|_{L^2(\Omega)}+\|\tau(u)\|_{L^p(\Omega)})\\
\leq& C\|\tau(u)\|_{L^p(\Omega)}(1+\|\tau(u)\|_{L^p(\Omega}).
\end{align*}
On the other hand, we have
\beno
\|h\|_{L^1(\Omega)}\leq \|\frac{\partial u}{\partial \bar{z}}\|_{L^2(\Omega)}^2\leq E(u,\Omega)\leq 1.
\eeno
This concludes that
\begin{align*}
\min\{\|h-h_1\|_{L^1(\Omega)},\ \|h\|_{L^1(\Omega)}\}\leq&\min\{C\|\tau(u)\|_{L^p(\Omega)}(1+\|\tau(u)\|_{L^p(\Omega)}),\ 1\}\\
\leq& C\|\tau(u)\|_{L^p(\Omega)}.
\end{align*}
Thus, the lemma is true for either $h_0=h_1$ or $h_0=0$.
\end{proof}

\begin{Lemma}\label{lem:app-small2}
There exist $\epsilon_0>0$ so that if
\beno
E(u,D_1\backslash D_{r})\leq \epsilon_0 \quad\text{for some }\, 0<r\leq \frac{1}{4},\quad \|\tau(u)\|_{L^p(D_1)}\le 1,
\eeno
and there exists a holomorphic function $h_{0,2r}$ in $D_{2r}$ satisfying
\beno
\|h-h_{0,2r}\|_{L^1(D_{2r})}+r^{\frac{2p-2}{p}}\|\tau(u)\|_{L^p(D_1)}\leq A_0.
\eeno
Then there exists a holomorphic function $h_0$ in $D_{1}$ such that
\beno
\|h-h_0\|_{L^1(D_{1})}\leq C\Big(A_0\ln\frac{1}{r}+\min\big\{\frac{A_0}{r},A_0^{\frac{1}{2}}+r^{\frac{1}{2}}\big\}+\|\tau(u)\|_{L^p(D_1)}\Big).
\eeno
Here $C$ is a constant independent of $A_0$ and $r$.
\end{Lemma}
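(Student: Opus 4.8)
The plan is to build $h_0$ as the regular (Taylor) part of a Coulomb‑gauge holomorphic approximation of $h$ on the low‑energy neck $D_1\setminus D_r$, and then to control the discarded principal part by comparing that approximation, on the overlap $D_{2r}\setminus D_r$, with the given $h_{0,2r}$.

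Since $E(u,D_1\setminus D_r)\le\epsilon_0$, Lemma \ref{lem:app-small} applied to $\Omega=D_1\setminus D_r$ gives a holomorphic $\wt h$ on $D_1\setminus D_r$ with $\|h-\wt h\|_{L^1(D_1\setminus D_r)}\le C\|\tau(u)\|_{L^p(D_1)}$. Expanding $\wt h=\sum_{n\in\Z}a_nz^n$ on the annulus, set $h_0\eqdef\sum_{n\ge0}a_nz^n$ — holomorphic on $D_1$, since the annulus of convergence of $\wt h$ contains $D_1$ — and $P\eqdef\wt h-h_0=\sum_{n\le-1}a_nz^n$, holomorphic on $\C\setminus\overline{D_r}$ with $P(\infty)=0$. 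Then
$$\|h-h_0\|_{L^1(D_1)}\le\|h-\wt h\|_{L^1(D_1\setminus D_r)}+\|P\|_{L^1(D_1\setminus D_r)}+\|h-h_{0,2r}\|_{L^1(D_r)}+\|h_{0,2r}-h_0\|_{L^1(D_r)},$$
with the first summand $\le C\|\tau\|_{L^p}$ and the third $\le A_0$. On $D_{2r}\setminus D_r$ — where $E(u,\cdot)\le\epsilon_0$ and $h_{0,2r}$ is holomorphic — the holomorphic function $F\eqdef\wt h-h_{0,2r}$ obeys $\|F\|_{L^1(D_{2r}\setminus D_r)}\le C\|\tau\|_{L^p}+A_0=:E_1$; its negative Laurent coefficients are exactly $\{a_n\}_{n\le-1}$ and its regular part is $h_0-h_{0,2r}$ (holomorphic on $D_{2r}$). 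Elementary bounds for holomorphic functions on the fixed‑modulus annulus $D_{2r}\setminus D_r$ (interior estimates together with the inner/outer splitting of the Laurent series) give $\|P\|_{L^1(D_{2r}\setminus D_r)}\le CE_1$ and $\|h_0-h_{0,2r}\|_{L^1(D_{2r}\setminus D_r)}\le CE_1$; the interior estimate for functions holomorphic on $D_{2r}$ then yields $\|h_0-h_{0,2r}\|_{L^\infty(D_r)}\le CE_1/r^2$, hence $\|h_{0,2r}-h_0\|_{L^1(D_r)}\le CE_1$.

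For $\|P\|_{L^1(D_1\setminus D_{2r})}$ I would estimate the $a_n$ from $F$ by Cauchy's formula on circles $|z|=\rho\in(r,2r)$ and averaging in $\rho$, obtaining $|a_{-1}|\le CE_1/r$, $|a_{-2}|\le CE_1$ and $|a_{-n}|\le CE_1\,(n-2)\,r^{\,n-2}$ for $n\ge3$; pairing these with $\|z^{-1}\|_{L^1(D_1\setminus D_{2r})}\le C$, $\|z^{-2}\|_{L^1(D_1\setminus D_{2r})}\le C\ln\tfrac1r$, $\|z^{-n}\|_{L^1(D_1\setminus D_{2r})}\le C(2r)^{-(n-2)}/(n-2)$ for $n\ge3$, the geometric factor $r^{\,n-2}(2r)^{-(n-2)}=2^{-(n-2)}$ makes the tail summable and
$$\|P\|_{L^1(D_1\setminus D_{2r})}\le C\big(|a_{-1}|+E_1\ln\tfrac1r+E_1\big)\le C\big(|a_{-1}|+E_1\ln\tfrac1r\big).$$
Collecting everything, $\|h-h_0\|_{L^1(D_1)}\le C\big(|a_{-1}|+E_1\ln\tfrac1r+\|\tau\|_{L^p}\big)$, so the whole lemma reduces to estimating the residue $a_{-1}=\frac1{2\pi i}\oint_{|z|=\rho}\wt h\,dz$.

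This residue step is the heart of the matter and is where the $\min$ and the scaling‑adapted hypothesis enter. One estimate is the crude $|a_{-1}|\le CE_1/r$. For a second, transport the Hopf
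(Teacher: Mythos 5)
Your overall architecture matches the paper's: apply Lemma \ref{lem:app-small} on the annulus $D_1\setminus D_r$ to get a holomorphic approximant $\widetilde h$, Laurent-split it into a regular part $h_0$ (the candidate) and a principal part $P$, use Cauchy estimates on the overlap annulus $D_{2r}\setminus D_r$ against the given $h_{0,2r}$ to bound the negative coefficients, and reduce everything to the residue $a_{-1}$. The crude bound $|a_{-1}|\leq CA_0/r$ and the treatment of the tail $n\geq 2$ are essentially the paper's. But the proposal stops at exactly the step that carries the content of the lemma: the \emph{refined} estimate $|a_{-1}|\lesssim A_0^{1/2}+r^{1/2}$ that produces the second entry of the $\min$. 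That estimate cannot be extracted from Lemma \ref{lem:app-small} treated as a black box. If all you know is that $\widetilde h$ is \emph{some} holomorphic function with $\|h-\widetilde h\|_{L^1(D_1\setminus D_r)}\leq C\|\tau\|_{L^p}$, the only residue bound available is the Cauchy bound $A_0/r$. The paper avoids this by not forgetting the structure of its approximant: it takes $\widetilde h=h_1=G_1^TG_1$, where $G_1$ is the holomorphic part of the gauged gradient from the Coulomb-frame construction. It then Laurent-expands $G_1=\sum a_nz^n$ (vector-valued), so $b_n=\sum_m\langle a_m,a_{n-m}\rangle$ are the coefficients of $h_1$, and controls $b_{-1}$ by first bounding it in terms of the residue $a_{-1}$ of $G_1$ plus small tails (using the convexity of $n\mapsto\log\|z^n\|_{L^2(D_1\setminus D_r)}^2$), and then showing $|a_{-1}|^2\lesssim |b_{-2}|+r+\|G_2\|^2$ via the crucial observation that $\operatorname{Im} a_{-1}$ is small: this comes from writing $G_1=q^T\partial_z u+(\text{error})$, where $q$ is the angular average of the moving frame $R$, so that the leading contribution to the residue is a real vector. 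None of this is visible from the $L^1$ statement of Lemma \ref{lem:app-small}, and ``transport the Hopf'' is not enough to conjure it; you would need to reopen the proof of that lemma.

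There is also a quantitative slip earlier in the argument. You set $E_1:=\|h-\widetilde h\|_{L^1(D_{2r}\setminus D_r)}+A_0\leq C\|\tau\|_{L^p}+A_0$, and your estimate of the tail of $P$ on $D_1\setminus D_{2r}$ then produces a term $E_1\ln\tfrac1r\geq\|\tau\|_{L^p}\ln\tfrac1r$. The lemma's conclusion allows $A_0\ln\tfrac1r$ and $\|\tau\|_{L^p}$, but not $\|\tau\|_{L^p}\ln\tfrac1r$; since the hypothesis only gives $r^{(2p-2)/p}\|\tau\|_{L^p}\leq A_0$, that term is not absorbable. The fix, again buried in the proof of Lemma \ref{lem:app-small}, is to use the $L^p$ estimate $\|h-h_1\|_{L^p(\Omega)}\leq C\|\tau\|_{L^p}$ and H\"older on the small annulus to get $\|h-h_1\|_{L^1(D_{2r}\setminus D_r)}\leq Cr^{(2p-2)/p}\|\tau\|_{L^p}\leq CA_0$, which is what makes all the negative-coefficient bounds scale with $A_0$ alone. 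So: right skeleton, but the proof is not finished, and as set up (abstract $\widetilde h$, $L^1$ comparison) it cannot be finished without reaching inside Lemma \ref{lem:app-small} for both the gauge structure and the $L^p$ bound.
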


\begin{proof}
Using the same notations as in Lemma \ref{lem:app-small}
with $\Omega=D_1\backslash D_{r}$, we have
\ben\label{eq:h-h1}
\|h-h_1\|_{L^1(D_1\backslash{D_r})}\leq C\|h-h_1\|_{L^p(D_1\backslash{D_r})}\leq C\|\tau(u)\|_{L^p(D_1)},
\een
 We denote by  $\sum\limits_{n\in \Z}a_nz^n (a_n\in \mathbb{C}^k)$ the Laurent expansion of $G_1(z)$ in $D_1\backslash D_{r}$.
Then we have
\beno
h_1(z)=\sum_{n\in \Z}b_nz^n\quad\text{with}\quad b_n=\sum_{m\in \Z}\langle a_m, a_{n-m}\rangle,
\eeno
and we define
\beno
{h}_0(z)=\sum_{n=0}^{\infty}b_nz^n.
\eeno
Hence, $h_0(z)$ is holomorphic in $D_1$. Since $h_{0,2r}$ is holomorphic in $D_{2r}$, we may write
\beno
h_{0,2r}(z)=\sum_{n=0}^{\infty}{b_n'}z^n\quad \text{in  }D_{2r}.
\eeno

Let $r_1=\f 54 r,\ r_2=\f 32 r, r_3=\f 74 r$. Then we obtain
\begin{align*}
\|h-{h}_0\|_{L^1(D_1)}\leq& \|h-{h}_0\|_{L^1(D_1\backslash{D_{r_2}})}+\|h-{h}_0\|_{L^1({D_{r_2}})}\\
\leq& \|h-{h}_1\|_{L^1(D_1\backslash{D_{r_2}})}+\|h-{h}_{0,2r}\|_{L^1({D_{r_2}})}\\
&+\|h_1-{h}_0\|_{L^1(D_1\backslash{D_{r_2}})}
+\|h_0-{h}_{0,2r}\|_{L^1({D_{r_2}})}\\
\leq& \|h-{h}_1\|_{L^1(D_1\backslash{D_{r}})}+\|h-{h}_{0,2r}\|_{L^1({D_{2r}})}\\
&+\sum\limits_{n=1}^{\infty}|b_{-n}|\|z^{-n}\|_{L^1(D_1\backslash{D_{r_2}})}
+\sum\limits_{n=0}^{\infty}|b_n-b_n'|\|z^{n}\|_{L^1({D_{r_2}})}\\
\leq& \|h-{h}_1\|_{L^1(D_1\backslash{D_{r}})}+\|h-{h}_{0,2r}\|_{L^1({D_{2r}})}\\
&+C\Big(|b_{-1}|+|b_{-2}|\ln\frac{1}{r_2}+\sum\limits_{n=3}^{\infty}|b_{-n}|\frac{1}{r_2^{n-2}}
+\sum\limits_{n=0}^{\infty}|b_n-b_n'|r_2^{n+2}\Big).
\end{align*}

{\bf Estimate of $b_{-n}$.}\medskip

Thanks to the assumption, we get
\begin{align}
\|h_{0,2r}-h_1\|_{L^1(D_{2r}\backslash{D_r})}\leq& \|h-h_{0,2r}\|_{L^1(D_{2r}\backslash{D_r})}+Cr^{\frac{2p-2}{p}}\|h-h_1\|_{L^p(D_{2r}\backslash{D_r})}\nonumber\\
\leq& CA_0,\nonumber
\end{align}
which implies that for $j=1,2,3$,
\beno
\int_{|z|=r_j}|z||h_{0,2r}-h_1||dz|\leq C\|h_{0,2r}-h_1\|_{L^1(D_{2r}\backslash{D_r})}\leq CA_0.
\eeno
Hence, we deduce that for $n\ge 1$,
\beno
|b_{-n}|=\frac{1}{2\pi}\Big|\int_{|z|=r_1}z^{n-1}h_1dz\Big|\leq r_1^{n-2}\int_{|z|=r_1}|z||h_{0,2r}-h_1||dz|\leq Cr_1^{n-2}A_0,
\eeno
and for $n\ge 0$,
\beno
|b_{n}-b_{n}'|=\frac{1}{2\pi}\Big|\int_{|z|=r_3}z^{-n-1}h_1dz\Big|\leq
r_3^{-n-2}\int_{|z|=r_3}|z||h_{0,2r}-h_1||dz|\leq Cr_3^{-n-2}A_0.
\eeno

{\bf Refined estimate of $b_{-1}$.}\medskip

Recalling $b_n=\sum\limits_{m\in \Z}\langle a_m,a_{n-m}\rangle$, we get
\ben
&&|b_{-1}|\leq 2\Big(|a_{-1}||a_0|+\sum_{n=1}^{\infty}|a_n||a_{-1-n}\Big),\label{eq:b-1}\\
&&|\langle a_{-1},a_{-1}\rangle|\leq |b_{-2}| +2\sum_{n=0}^{\infty}|a_n||a_{-2-n}|.\label{eq:a-1}
\een
A direct calculation yields that For $r\leq \rho_2< \rho_1\leq1$, we have
\beno
&&\|z^n\|_{L^2(D_{\rho_1}\backslash{D_{\rho_2}})}^2=\pi\dfrac{\rho_1^{2n+2}-\rho_2^{2n+2}}{n+1}\quad \text{for } n\neq -1,\\
&&\|z^{-1}\|_{L^2(D_{\rho_1}\backslash{D_{\rho_2}})}^2=2\pi\ln\dfrac{\rho_1}{\rho_2},\\
&&\|z^n\|_{L^2(D_{\rho_1}\backslash{D_{\rho_2}})}^2\leq\rho_1^{2}\|z^n\|_{L^2(D_{1}\backslash{D_{r}})}^2\quad \text{for } n\ge 0,\\
&&\|z^n\|_{L^2(D_{\rho_1}\backslash{D_{\rho_2}})}^2\leq({r}/{\rho_2})^{2}\|z^n\|_{L^2(D_{1}\backslash{D_{r}})}^2\quad \text{for } n\le -2.
\eeno
Then, using the fact that
\beno
\|G_1\|_{L^2({D_{\rho_1}\backslash{D_{\rho_2}}})}^2=\sum\limits_{n\in \Z} |a_n|^2\|z^n\|_{L^2(D_{\rho_1}\backslash{D_{\rho_2}})}^2,
\eeno
we infer that
\ben\label{eq:G1-up}
\|G_1\|_{L^2({D_{\rho_1}\backslash{D_{\rho_2}}})}^2\leq2\pi|a_{-1}|^2\ln\dfrac{\rho_1}{\rho_2}
+\max\big(\rho_1,\frac{r}{\rho_2}\big)^2\|G_1\|_{L^2(D_{1}\backslash{D_{r}})}^2.
\een

Using the fact that $\ln\|z^n\|_{L^2(D_{1}\backslash{D_{r}})}^2$ is a convex function of $n$, we obtain
\begin{align*}
\|z^n\|_{L^2(D_{1}\backslash{D_{r}})}\|z^{-1-n}\|_{L^2(D_{1}\backslash{D_{r}})}
&\geq\|z^1\|_{L^2(D_{1}\backslash{D_{r}})}\|z^{-2}\|_{L^2(D_{1}\backslash{D_{r}})}\\
&=\pi\sqrt{\dfrac{(1-r^{4})(1-r^2)}{2r^2}}
\end{align*}
for $n\ge 1$, and for $n\ge 0$,
\begin{align*}
\|z^n\|_{L^2(D_{1}\backslash{D_{r}})}\|z^{-2-n}\|_{L^2(D_{1}\backslash{D_{r}})}
&\geq\|z^0\|_{L^2(D_{1}\backslash{D_{r}})}\|z^{-2}\|_{L^2(D_{1}\backslash{D_{r}})}\\
&=\pi\dfrac{1-r^2}{r}.
\end{align*}
Then we conclude that
\begin{align*}
\|G_1\|_{L^2({D_{1}\backslash{D_{r}}})}^2=&\sum\limits_{n\in \Z} |a_n|^2\|z^n\|_{L^2(D_{1}\backslash{D_{r}})}^2\\
\geq& |a_0|^2\|z^0\|_{L^2(D_{1}\backslash{D_{r}})}^2+2\sum\limits_{n=1}^{\infty} |a_n||a_{-1-n}|\|z^n\|_{L^2(D_{1}\backslash{D_{r}})}\|z^{-1-n}\|_{L^2(D_{1}\backslash{D_{r}})}\\
\geq& |a_0|^2\pi(1-r^2)+2\sum\limits_{n=1}^{\infty} |a_n||a_{-1-n}|\pi\sqrt{\dfrac{(1-r^{4})(1-r^2)}{2r^2}}\\
\geq& |a_0|^2+\frac{2}{r}\sum\limits_{n=1}^{\infty} |a_n||a_{-1-n}|,
\end{align*}
and
\begin{align*}
\|G_1\|_{L^2({D_{1}\backslash{D_{r}}})}^2=&\sum\limits_{n\in \Z} |a_n|^2\|z^n\|_{L^2(D_{1}\backslash{D_{r}})}^2\\
\geq& 2\sum\limits_{n=0}^{\infty} |a_n||a_{-2-n}|\|z^n\|_{L^2(D_{1}\backslash{D_{r}})}\|z^{-2-n}\|_{L^2(D_{1}\backslash{D_{r}})}\\
\geq& 2\sum\limits_{n=1}^{\infty} |a_n||a_{-2-n}|\pi\dfrac{1-r^2}{r}\\
\geq& \frac{2}{r}\sum\limits_{n=1}^{\infty} |a_n||a_{-2-n}|,
\end{align*}
which along with (\ref{eq:b-1}) and (\ref{eq:a-1})  yield that
\ben
&&|b_{-1}|\leq 2|a_{-1}|\|G_1\|_{L^2({D_{1}\backslash{D_{r}}})}+r\|G_1\|_{L^2({D_{1}\backslash{D_{r}}})}^2,\label{eq:b-1-2}\\
&&|\langle a_{-1},a_{-1}\rangle|\leq |b_{-2}| +r\|G_1\|_{L^2({D_{1}\backslash{D_{r}}})}^2.\label{eq:a-1-2}
\een

It remains to estimate $a_{-1}$. For this, we denote
\beno
q(z)=q(|z|)=\dfrac{1}{2\pi}\int_{0}^{2\pi}R(ze^{i\theta})d\theta,\quad u=\left(\begin{array}{l}u\\0\end{array}\right)\in \R^{2k}.
\eeno
Let $\Om=D_1\backslash{D_{r}}$. We have
\begin{align*}
\left\|\dfrac{R-q}{\bar{z}}\right\|_{L^2(\Omega)}^2=&\int_r^1\int_{0}^{2\pi}\dfrac{|R(te^{i\theta})-q(t)|^2}{t^2}tdtd\theta
\\ \leq&\int_r^1\int_{0}^{2\pi}\dfrac{|\partial_{\theta}R(te^{i\theta})|^2}{t^2}tdtd\theta\\
\leq&\left\|\nabla{R}\right\|_{L^2(\Omega)}^2\leq C\epsilon_0.
\end{align*}
Moreover, we also have
\beno
A=R^T\dfrac{\partial u}{\partial z},\quad G=B^TA=B^TR^T\dfrac{\partial u}{\partial z},
\eeno
which gives
\begin{align*}
G_1=&G-G_2=B^TR^T\dfrac{\partial u}{\partial z}-G_2\\
=&q^T\dfrac{\partial u}{\partial z}+(R-q)^T\dfrac{\partial u}{\partial z}+(B-I_k)^TR^T\dfrac{\partial u}{\partial z}-G_2.
\end{align*}
Therefore for $ \rho=\sqrt{r/2}$, we have
\beno
(2\pi\ln2)a_{-1}=\int_{D_{2\rho}\backslash D_{\rho}}\frac{G_1(z)}{\bar{z}}dz\triangleq I_1+I_2,
\eeno
where
\beno
&&I_1=\int_{D_{2\rho}\backslash D_{\rho}}\dfrac{q^T}{\bar{z}}\dfrac{\partial u}{\partial z}dz,\\
&&I_2=\int_{D_{2\rho}\backslash D_{\rho}}\Big(\dfrac{(R-q)^T}{\bar{z}}\dfrac{\partial u}{\partial z}+(B-I_k)^T\dfrac{R^T}{\bar{z}}\dfrac{\partial u}{\partial z}-\dfrac{G_2}{\bar{z}}\Big)dz.
\eeno
Notice that
\begin{align*}
I_1=&\int_{\rho}^{2\rho}\int_{0}^{2\pi}\dfrac{q(t)^T}{te^{-i\theta}}\frac{e^{-i\theta}}{2}(\partial_t-i\frac1t\partial_{\theta})
u(te^{i\theta})tdtd\theta\\
=&\int_{\rho}^{2\rho}\int_{0}^{2\pi}\dfrac{q(t)^T}{2}\partial_tu(te^{i\theta})dtd\theta\in\R^{k}.
\end{align*}
Thus, we obtain
\begin{align*}
(2\pi\ln2)|\textrm{Im}a_{-1}|\leq|I_2|\leq& \Big\|\dfrac{R-q}{\bar{z}}\Big\|_{L^2(\Omega)}
\Big\|\dfrac{\partial u}{\partial z}\Big\|_{L^2(D_{2\rho}\backslash D_{\rho})}
+\Big\|\dfrac{1}{\bar{ z}}\Big\|_{L^2(D_{2\rho}\backslash D_{\rho})}\|G_2\|_{L^2(D_{2\rho}\backslash D_{\rho})}\\
&+\|B-I_k\|_{L^\infty(\Omega)}\Big\|\dfrac{1}{\bar{ z}}\Big\|_{L^2(D_{2\rho}\backslash D_{\rho})}\Big\|\dfrac{\partial u}{\partial z}\Big\|_{L^2(D_{2\rho}\backslash D_{\rho})}\\
\leq& C\epsilon_0^{\frac{1}{2}}\left\|\dfrac{\partial u}{\partial z}\right\|_{L^2(D_{2\rho}\backslash D_{\rho})}+C\epsilon_0\left\|\dfrac{\partial u}{\partial z}\right\|_{L^2(D_{2\rho}\backslash D_{\rho})}+C\left\|G_2\right\|_{L^2(D_{2\rho}\backslash D_{\rho})}\\
\leq& C\epsilon_0^{\frac{1}{2}}\left\|G\right\|_{L^2(D_{2\rho}\backslash D_{\rho})}+C\left\|G_2\right\|_{L^2(D_{2\rho}\backslash D_{\rho})}\\
\leq& C\epsilon_0^{\frac{1}{2}}\left\|G_1\right\|_{L^2(D_{2\rho}\backslash D_{\rho})}+C\left\|G_2\right\|_{L^2(D_{2\rho}\backslash D_{\rho})}\\
\leq& C\epsilon_0^{\frac{1}{2}}\Big({2\pi\ln2}|a_{-1}|^2
+(2\rho)^2\|G_1\|_{L^2(D_{1}\backslash{D_{r}})}^2\Big)^{\frac{1}{2}}+C\left\|G_2\right\|_{L^2(D_{2\rho}\backslash D_{\rho})},
\end{align*}
where we used (\ref{eq:G1-up}) in the last inequality. This along with (\ref{eq:a-1-2}) gives
\begin{align*}
|a_{-1}|^2=&\textrm{Re}\langle a_{-1},a_{-1}\rangle+2|\textrm{Im}a_{-1}|^2\\
\leq& |b_{-2}| +r\|G_1\|_{L^2({D_{1}\backslash{D_{r}}})}^2+C\epsilon_0\Big(|a_{-1}|^2
+r\|G_1\|_{L^2(D_{1}\backslash{D_{r}})}^2\Big)+C\left\|G_2\right\|_{L^2(D_{2\rho}\backslash D_{\rho})}^2\\
\leq& |b_{-2}| +Cr\|G_1\|_{L^2({D_{1}\backslash{D_{r}}})}^2+C\left\|G_2\right\|_{L^2(D_{2\rho}\backslash D_{\rho})}^2+C\epsilon_0|a_{-1}|^2.
\end{align*}
Taking $\epsilon_0$ small such that $C\epsilon_0\leq \frac{1}{2}$, we obtain
\beno
|a_{-1}|^2\leq C\big(|b_{-2}| +r\|G_1\|_{L^2({D_{1}\backslash{D_{r}}})}^2+\left\|G_2\right\|_{L^2(D_{2\rho}\backslash D_{\rho})}^2\big).
\eeno

Using the facts that
\begin{align*}
\|G_1\|_{L^2({D_{1}\backslash{D_{r}}})}\leq&\|G\|_{L^2({D_{1}\backslash{D_{r}}})}+\|G_2\|_{L^2({D_{1}\backslash{D_{r}}})}\\
\leq& C\|\nabla u\|_{L^2({D_{1}\backslash{D_{r}}})}+C\|\tau\|_{L^p({D_{1}\backslash{D_{r}}})}\leq C,
\end{align*}
and $|b_{-2}|\le CA_0$ and
\begin{align*}
\|G_2\|_{L^2(D_{2\rho}\backslash D_{\rho})}^2\leq& C\rho^{\frac{4p-4}{p}}\|G_2\|_{L^\frac{2p}{2-p}(D_{1}\backslash D_{r})}^2\leq C\rho^{\frac{4p-4}{p}}\|\tau\|_{L^p(D_{1}\backslash D_{r})}^2\\
\leq&  Cr^{\frac{2p-2}{p}}\|\tau\|_{L^p(D_{1}\backslash D_{r})}\leq CA_0,
\end{align*}
we deduce that
\ben
&&|a_{-1}|^2\leq C(A_0 +r),\label{eq:a-1-up}\\
&&|b_{-1}|\leq C(|a_{-1}| +r)\leq C\big(A_0^{\frac{1}{2}}+r^{\frac{1}{2}}\big).\label{eq:b-1-up}
\een
On the other hand, we also have $|b_{-1}|\leq C\dfrac{A_0}{r}$, hence,
\beno
|b_{-1}|\leq C\min\Big(\dfrac{A_0}{r},A_0^{\frac{1}{2}}+r^{\frac{1}{2}}\Big).
\eeno

Collecting the estimates of $b_n$,  we finally conclude that
\begin{align*}
\|h-{h}_0\|_{L^1(D_1)}\leq& C\Big(\|\tau(u)\|_{L^p(D_1)}+A_0\ln\dfrac{1}{r}+|b_{-1}|\Big)\\
 \leq& C\Big(\|\tau(u)\|_{L^p(D_1)}+A_0\ln\dfrac{1}{r}+\min\Big(\dfrac{A_0}{r},A_0^{\frac{1}{2}}+r^{\frac{1}{2}}\Big)\Big).
\end{align*}
This completes the proof of Lemma \ref{lem:app-small2}.
\end{proof}

\begin{Lemma}\label{lem:holo-exten}
Let $0<r<\rho$. If $h\in L^1(D(z_0,2r+\rho))$ and for any $z\in D(z_0,r+\rho)$,
there exists a holomorphic function $h_{z,r}$ in $D(z,r)$ so that
\beno
\|h-h_{z,r}\|_{L^1(D(z,r))}\leq A_0,
\eeno
then there exists a holomorphic function $h_0$ in $D(z_0,\rho)$ so that
\beno
\|h-h_0\|_{L^1(D(z_0,\rho))}\leq C\frac{\rho^3}{r^3}A_0.
\eeno
Here $C$ is a constant independent of $r,\rho,$ and $A_0$.
\end{Lemma}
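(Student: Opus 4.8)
\noindent\emph{Proof strategy.} The plan is to turn the family of \emph{local} holomorphic approximants $\{h_{z,r}\}$ into a single \emph{global} one on $D(z_0,\rho)$ by a partition‑of‑unity (\v{C}ech--to--Dolbeault) construction, followed by an $L^1$‑bounded solution of a $\bar\partial$‑equation.

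First I would fix the combinatorics: take a lattice of points $z_1,z_2,\dots$ of spacing comparable to $r$ covering an $\tfrac r5$‑neighbourhood of $\overline{D(z_0,\rho)}$, so that each $z_j\in D(z_0,r+\rho)$ (hence $h_{z_j,r}$ is defined on $D(z_j,r)\subset D(z_0,2r+\rho)$), together with a partition of unity $\{\chi_j\}$ such that $\mathrm{supp}\,\chi_j\subset D(z_j,\tfrac r5)$, $\sum_j\chi_j\equiv 1$ near $\overline{D(z_0,\rho)}$, $|\nabla\chi_j|\le C/r$, and the cover has uniformly bounded overlap. Set $\tilde h:=\sum_j\chi_j\,h_{z_j,r}$. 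Since $\sum_j\chi_j=1$ there, $h-\tilde h=\sum_j\chi_j(h-h_{z_j,r})$, and summing $\|h-h_{z_j,r}\|_{L^1(D(z_j,r))}\le A_0$ over the $\lesssim(\rho/r)^2$ indices meeting $D(z_0,\rho)$ gives $\|h-\tilde h\|_{L^1(D(z_0,\rho))}\le C(\rho/r)^2A_0$.

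The core step is estimating $\partial_{\bar z}\tilde h$. Because each $h_{z_j,r}$ is holomorphic, $\partial_{\bar z}\tilde h=\sum_j(\partial_{\bar z}\chi_j)h_{z_j,r}$, and since $\sum_j\partial_{\bar z}\chi_j=0$, at any $w$ one may subtract a reference model $h_{z_{j_0},r}$ with $j_0$ chosen so that $w\in D(z_{j_0},\tfrac r5)$:
\[
\partial_{\bar z}\tilde h(w)=\sum_{j}(\partial_{\bar z}\chi_j)(w)\,(h_{z_j,r}(w)-h_{z_{j_0},r}(w)).
\]
For each $j$ occurring at $w$, both $z_j$ and $z_{j_0}$ lie within $\tfrac r5$ of $w$, so $D(w,\tfrac{4r}5)\subset D(z_j,r)\cap D(z_{j_0},r)$; thus $d_j:=h_{z_j,r}-h_{z_{j_0},r}$ is holomorphic on $D(w,\tfrac{4r}5)$ with $\|d_j\|_{L^1(D(w,4r/5))}\le 2A_0$ by the triangle inequality against $h$, and the area mean value property upgrades this to $|d_j(w)|\le C\,r^{-2}A_0$. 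Combining with $|\partial_{\bar z}\chi_j|\le C/r$ and bounded overlap, $\|\partial_{\bar z}\tilde h\|_{L^\infty(D(z_0,\rho))}\le C\,r^{-3}A_0$, hence $\|\partial_{\bar z}\tilde h\|_{L^1(D(z_0,\rho))}\le C\,\rho^2r^{-3}A_0$.

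Finally I would solve $\partial_{\bar z}v=\partial_{\bar z}\tilde h$ on $D(z_0,\rho)$ via the Cauchy transform $v:=\tfrac1{\pi z}*\big((\partial_{\bar z}\tilde h)\,\mathbf{1}_{D(z_0,\rho)}\big)$, so that $h_0:=\tilde h-v$ is holomorphic in $D(z_0,\rho)$; since $|w-\zeta|\le2\rho$ there and $\int_{D(\zeta,2\rho)}|w-\zeta|^{-1}\,dw=4\pi\rho$, Fubini gives $\|v\|_{L^1(D(z_0,\rho))}\le 4\rho\,\|\partial_{\bar z}\tilde h\|_{L^1(D(z_0,\rho))}\le C\rho^3r^{-3}A_0$, and
\[
\|h-h_0\|_{L^1(D(z_0,\rho))}\le\|h-\tilde h\|_{L^1}+\|v\|_{L^1}\le C(\rho/r)^2A_0+C(\rho/r)^3A_0\le C(\rho/r)^3A_0,
\]
using $\rho>r$. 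The one delicate point is the bound on $\partial_{\bar z}\tilde h$: the hypothesis only furnishes $L^1$‑smallness of $h$ relative to local holomorphic models, and the way to get a pointwise bound is to observe that the \emph{difference} of two overlapping models is itself a small holomorphic function on a disk of size $\sim r$, so interior (mean value) estimates turn $L^1$‑smallness into $L^\infty$‑smallness; the resulting two scale factors $r^{-1}$ (from $\nabla\chi_j$) and $r^{-2}$ (mean value), multiplied by $\rho^2$ (integration over $D(z_0,\rho)$) and $\rho$ (the Cauchy kernel), produce exactly the claimed $\rho^3/r^3$. Supporting the cut‑offs on the smaller disks $D(z_j,\tfrac r5)$ rather than on the radius‑$r$ disks themselves is precisely what keeps those model differences on a fat disk and prevents a worse power.
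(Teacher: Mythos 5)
Your proof is correct, and the overall template is the same as the paper's—build an approximant $\tilde h$ with $\|h-\tilde h\|_{L^1}\lesssim(\rho/r)^2A_0$ and $\|\partial_{\bar z}\tilde h\|_{L^1}\lesssim\rho^2r^{-3}A_0$, then subtract the Cauchy transform of $\partial_{\bar z}\tilde h\,\mathbf{1}_{D(z_0,\rho)}$ to produce the holomorphic $h_0$. But the construction of $\tilde h$ is genuinely different. You glue the local models $h_{z_j,r}$ by a partition of unity on a lattice of spacing $\sim r$, and control $\partial_{\bar z}\tilde h$ with the \v{C}ech--Dolbeault cocycle trick: since $\sum_j\partial_{\bar z}\chi_j=0$, one may replace $h_{z_j,r}$ by the difference $h_{z_j,r}-h_{z_{j_0},r}$, which is a holomorphic function small in $L^1$ on a disk of radius $\sim r$ and therefore small in $L^\infty$ by the mean value estimate. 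The paper instead mollifies $h$ itself: it sets $h_{(r)}=\phi_{(r)}*h$ with a fixed radial kernel of width $r$ and exploits the mean value property in the complementary direction, namely $\phi_{(r)}*h_{z,r}=h_{z,r}$ on $D(z,r/2)$, so that locally $h_{(r)}-h_{z,r}=\phi_{(r)}*(h-h_{z,r})$ and $\partial_{\bar z}h_{(r)}=(\partial_{\bar z}\phi_{(r)})*(h-h_{z,r})$, giving the local $L^1$ bounds $2A_0$ and $Cr^{-1}A_0$ directly by Young's inequality; a Fubini averaging over centers then upgrades both to global $L^1$ bounds with the factor $(1+2\rho/r)^2$. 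The paper's route avoids the lattice, the partition of unity, and the detour through $L^\infty$, and requires no bounded-overlap bookkeeping, so it is somewhat leaner; your route is more in the spirit of a standard $\bar\partial$-patching argument and exhibits the same $r^{-1}\cdot r^{-2}\cdot\rho^2\cdot\rho$ bookkeeping in a transparent way. Both give the stated $C\rho^3/r^3$ bound, and neither needs more than the hypotheses.
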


\begin{proof}
Let $\phi$ be a radial cut-off function satisfying
\beno
\textrm{supp}\,\phi\subseteq D_{\frac{1}{2}},\quad \int_{\R^2}\phi dx=1.
\eeno
Let $\phi_{(r)}(x)=r^{-2}\phi\left(\frac{x}{r}\right)$ and
\beno
h_{(r)}(z)=\phi_{(r)}*h(z)=\int_{D(z_0,2r+\rho)}\phi_{(r)}(z-y)h(y)dy
\eeno
for $z\in D(z_0,r+\rho)$. We have $\phi_{(r)}*h_{z,r}=h_{z,r}$ in $D(z,\frac{r}{2})$ due to the mean value equality of holomorphic function.
Therefore,
\begin{align*}
\|h-h_{(r)}\|_{L^1(D(z,\frac{r}{2}))}\leq& \|h-h_{z,r}\|_{L^1(D(z,\frac{r}{2}))}+\|h_{(r)}-h_{z,r}\|_{L^1(D(z,\frac{r}{2}))}\\
=&\|h-h_{z,r}\|_{L^1(D(z,\frac{r}{2}))}+\|\phi_{(r)}*(h-h_{z,r})\|_{L^1(D(z,\frac{r}{2}))}\\
\leq&\|h-h_{z,r}\|_{L^1(D(z,r))}+\|\phi_{(r)}\|_{L^1}\|(h-h_{z,r})\|_{L^1(D(z,r))}\\
=&2A_0.
\end{align*}
Using Fubini theorem, we get
\begin{align*}
\int_{D(z_0,\frac{r}{2}+\rho)}\|h-h_{(r)}\|_{L^1(D(z,\frac{r}{2}))}dz
=&\int_{D(z_0,\frac{r}{2})}\|h-h_{(r)}\|_{L^1(D(z,\frac{r}{2}+\rho))}dz\\ \geq&\int_{D(z_0,\frac{r}{2})}\|h-h_{(r)}\|_{L^1(D(z_0,\rho))}dz\nonumber\\
=&\pi\left(\frac{r}{2}\right)^2\|h-h_{(r)}\|_{L^1(D(z_0,\rho))},
\end{align*}
which implies
\begin{align}
\|h-h_{(r)}\|_{L^1(D(z_0,\rho))}\leq&\Big(1+\frac{2\rho}{r}\Big)^2\sup_{z\in D(z_0,\frac{r}{2}+\rho)}\|h-h_{(r)}\|_{L^1(D(z,\frac{r}{2}))}\nonumber\\
\leq& \Big(1+\frac{2\rho}{r}\Big)^2(2A_0).\label{eq:h-hr}
\end{align}
Notice that in $D(z,r/2)$, we have
\begin{align*}
\frac{\partial}{\partial\bar{z}}h_{(r)}=\frac{\partial}{\partial\bar{z}}(h_{(r)}-h_{z,r})
=&\frac{\partial}{\partial\bar{z}}(\phi_{(r)}*(h-h_{z,r}))\\
=&\Big(\frac{\partial}{\partial\bar{z}}\phi_{(r)}\Big)*(h-h_{z,r}).
\end{align*}
Therefore,
\beno
\Big\|\frac{\partial}{\partial\bar{z}}h_{(r)}\Big\|_{L^1(D(z,\frac{r}{2}))}\leq
\Big\|\frac{\partial}{\partial\bar{z}}\phi_{(r)}\Big\|_{L^1}
\|h-h_{z,r}\|_{L^1(D(z,r))}\leq\frac{C}{r}A_0,
\eeno
which implies (similar to (\ref{eq:h-hr}))
\begin{align*}
\Big\|\frac{\partial}{\partial\bar{z}}h_{(r)}\Big\|_{L^1(D(z_0,\rho))}\leq&
\Big(1+\frac{2\rho}{r}\Big)^2\sup_{z\in D(z_0,\frac{r}{2}+\rho)}\Big\|\frac{\partial}{\partial\bar{z}}h_{(r)}\Big\|_{L^1(D(z,\frac{r}{2}))}\\
\leq& \Big(1+\frac{2\rho}{r}\Big)^2\frac{C}{r}A_0.
\end{align*}

Now we let $h_1=(\frac{1}{\pi z})*(\frac{\partial}{\partial\bar{z}}h_{(r)}\chi_{D(z_0,\rho)})$ and $h_0=h_{(r)}-h_1$.
Then $h_0$ is holomorphic in $D(z_0,\rho)$ and by (\ref{eq:h-hr}),
\begin{align*}
\|h-h_0\|_{L^1(D(z_0,\rho))}=&\|h-h_{(r)}+h_1\|_{L^1(D(z_0,\rho))}\\
\leq&\|h-h_{(r)}\|_{L^1(D(z_0,\rho))}+\|h_1\|_{L^1(D(z_0,\rho))}\\
\leq&\Big(1+\frac{2\rho}{r}\Big)^2\Big(2A_0+\frac{C\rho}{r}A_0\Big)\\
\leq& C\frac{\rho^3}{r^3}A_0,
\end{align*}
here we used
\beno
\|h_1\|_{L^1(D(z_0,\rho))}\leq\Big\|\frac{1}{\pi z}\Big\|_{L^1(D_{2\rho})}\Big\|\frac{\partial}{\partial\bar{z}}h_{(r)}\Big\|_{L^1(D(z_0,\rho))}
\leq\Big(1+\frac{2\rho}{r}\Big)^2\frac{C\rho }{r}A_0.
\eeno
This completes the proof of Lemma \ref{lem:holo-exten}.
\end{proof}
\medskip

Now we are in a position to prove Proposition \ref{prop:holomorphic approx}.\medskip

\no{\bf Proof of Proposition \ref{prop:holomorphic approx}}. We use the induction argument. The case of $m=1$ follows from Lemma \ref{lem:app-small}. Let us assume that the case of $m-1$ is true. The proof of the assertion for $m$ is split it into many cases.\medskip

{\bf Case 1.}\,$\|\tau(u)\|_{L^p(D_1)}\geq 1$. \medskip

In this case, we may take $h_0=0$, since
\beno
\|h\|_{L^1(D_1)}\leq\Big\|\frac{\partial u}{\partial z}\Big\|_{L^2(D_1)}^2\leq E(u,D_1)\leq m\epsilon_0\leq m\epsilon_0\|\tau(u)\|_{L^p(D_{1})}^{3^{1-m}}.
\eeno

{\bf Case 2.}\,$\|\tau(u)\|_{L^p(D_1)}\leq 1$ and $E(u,D_{\frac{1}{4}})\leq \epsilon_0$.\medskip

We first consider the function $v(z)=u(\frac{z}{4})$, which satisfies
\beno
E(v,D_1)=E(u,D_{\frac{1}{4}})\leq \epsilon_0,\quad \tau(v)(z)=\frac{1}{16}\tau(u)(\frac{z}{4}),
\eeno
hence,
\beno
\|\tau(v)\|_{L^p(D_{1})}=(\frac{1}{4})^{\frac{2p-2}{p}}\|\tau(u)\|_{L^p(D_{\frac14})}\leq\|\tau(u)\|_{L^p(D_1)}\leq 1.
\eeno
Then Lemma \ref{lem:app-small} ensures that there exists a holomorphic function $\widetilde{h}_0$ in $D_{1}$ so that
\beno
\|\widetilde{h}-\widetilde{h}_0\|_{L^1(D_{1})}\leq C\|\tau(v)\|_{L^p(D_{1})},
\eeno
where $\widetilde{h}(z)=\langle\frac{\partial v}{\partial z},\frac{\partial v}{\partial z}\rangle=\frac{1}{16}h(\frac{z}{4}).$
Therefore, $h_0(z)=16\widetilde{h}_0(4z)$ is holomorphic in $D_{\frac14}$ and satisfies
\begin{align*}
\|{h}-{h_0}\|_{L^1(D_{\frac14})}&=\|\widetilde{h}-\widetilde{h}_0\|_{L^1(D_{1})}\\
&\leq C\|\tau(v)\|_{L^p(D_{1})}\leq C\|\tau(u)\|_{L^p(D_{1})}\leq C\|\tau(u)\|_{L^p(D_{1})}^{3^{1-m}}.
\end{align*}

{\bf Case 3.}\,$\|\tau(u)\|_{L^p(D_1)}\leq 1$ and $Q(\frac{1}{8})\leq (m-1)\epsilon_0$, where
\beno
Q(t)\eqdef\sup\limits_{D(z,t)\subseteq D_1}E(u,D(z,t))\quad \text{for } t\in [0,1].
\eeno
Obvious,  $Q(t)$ is continuous and non-decreasing in $t$ and $Q(0)=0$. In this case, we let $v(z)=u(z'+\frac{z}{8})$ for $z'\in D_{1/2}$. Then $v$ satisfies
\beno
E(v,D_1)=E(u,D(z',\frac{1}{8}))\leq Q(\frac{1}{8})\leq (m-1)\epsilon_0,
\eeno
and $\tau(v)(z)=\frac{1}{64}\tau(u)(z'+\frac{z}{8})$, hence,
\beno
\|\tau(v)\|_{L^p(D_{1})}=(\frac{1}{8})^{\frac{2p-2}{p}}\|\tau(u)\|_{L^p(D(z',\frac{1}{8}))}\leq\|\tau(u)\|_{L^p(D_1)}.
\eeno
Then the induction assumption ensures that there exists a holomorphic function $\widetilde{h}_0$ in $D_{\frac14}$ so that
\beno
\|\widetilde{h}-\widetilde{h}_0\|_{L^1(D_{\frac14})}\leq C_{m-1}\|\tau(v)\|_{L^p(D_{1})}^{3^{2-m}}.
\eeno
where $\widetilde{h}=\langle\frac{\partial v}{\partial z},\frac{\partial v}{\partial z}\rangle=\frac{1}{64}h(z'+\frac{z}{8})$.
Set $r=\frac{1}{32}$. Then $h_{z',r}(z)=64\widetilde{h}_0(8(z-z'))$ is holomorphic in $D(z',\f 1 {32})$ and satisfies
\beno
\|{h}-{h_{z',r}}\|_{L^1(D(z',\f 1 {32}))}=\|\widetilde{h}-\widetilde{h}_0\|_{L^1(D_{\frac14})}\leq C_{m-1}\|\tau(v)\|_{L^p(D_{1})}^{3^{2-m}}\leq C_{m-1}\|\tau(u)\|_{L^p(D_{1})}^{3^{2-m}}.
\eeno
Therefore, $h$ satisfies the conditions in Lemma \ref{lem:holo-exten} with $z_0=0, r=\f 1 {32}, \rho=\f 14$ and $A_0=C_{m-1}\|\tau(u)\|_{L^p(D_{1})}^{3^{2-m}}$.
Thus, Lemma \ref{lem:holo-exten} ensures the existence of a holomorphic function ${h_0}$ in $D_{\frac14}$ so that
\beno
\|h-h_0\|_{L^1(D_{\frac14})}\leq C\frac{\rho^3}{r^3}A_0=8^3CC_{m-1}\|\tau(u)\|_{L^p(D_{1})}^{3^{2-m}}\leq CC_{m-1}\|\tau(u)\|_{L^p(D_{1})}^{3^{1-m}}.
\eeno

{\bf Case 4.}\,$\|\tau(u)\|_{L^p(D_1)}\leq 1,\ Q(\frac{1}{8})> (m-1)\epsilon_0$ and $E(u,D_{\frac{1}{4}})>\epsilon_0$.

In this case, there exists $0<r_0<\frac{1}{8}$ such that $Q(r_0)=(m-1)\epsilon_0$. Thus, there exists $D(z_0,r_0)\subseteq D_1$ so that $E(u,D(z_0,r_0))=(m-1)\epsilon_0$.
Hence,
\beno
E(u,D(z_0,r_0))+E(u,D_{\frac{1}{4}})>E(u,D_1),
\eeno
which implies that $D(z_0,r_0)\cap D_{\frac{1}{4}}\neq \emptyset,$ thus $|z_0|\leq\frac{1}{4}+r_0$ and then $D(z_0,4r_0)\subseteq D_1.$
For $z'\in D(z_0,3r_0),$ the function $v(z)=u(z'+r_0{z})$ satisfies
\beno
E(v,D_1)=E(u,D(z',r_0))\leq Q(r_0)= (m-1)\epsilon_0,
\eeno
and $\tau(v)(z)=r_0^2\tau(u)(z'+r_0{z})$, hence,
\beno
\|\tau(v)\|_{L^p(D_{1})}=r_0^{\frac{2p-2}{p}}\|\tau(u)\|_{L^p(D(z',r_0))}\leq
r_0^{\frac{2p-2}{p}}\|\tau(u)\|_{L^p(D_1)}.
\eeno
Then following argument of Case 3(using Lemma \ref{lem:holo-exten} with $A_0=C_{m-1}\Big(r_0^{\frac{2p-2}{p}}\|\tau(u)\|_{L^p(D_{1})}\Big)^{3^{2-m}},\\ r=\frac{r_0}{4}, \rho=2r_0$ ), we can conclude the existence of a holomorphic function $\widetilde{h}_0$ in $D(z_0,2r_0)$ such that
\beno
\|h-\widetilde{h}_0\|_{L^1(D(z_0,2r_0))}\leq C\frac{\rho^3}{r^3}A_0=8^3CC_{m-1}\Big(r_0^{\frac{2p-2}{p}}\|\tau(u)\|_{L^p(D_{1})}\Big)^{3^{2-m}}.
\eeno

Now set $\rho_0=\frac{1}{2}+r_0,\ r=r_0/\rho_0$ and consider the function $v(z)=u(\rho_0{z}+z_0)$. Then we have
\beno
&&E(v,D_1\backslash D_r)=E\left(u,D\left(z_0,\rho_0\right)\backslash D(z_0,r_0)\right)\leq E(u,D_1)-E(u,D(z_0,r_0))\leq \epsilon_0,\\
&&\|\tau(v)\|_{L^p(D_{1})}=\rho_0^{\frac{2p-2}{p}}\|\tau(u)\|_{L^p\left(D\left(z',\rho_0\right)\right)}\leq
\|\tau(u)\|_{L^p(D_1)}\leq 1.
\eeno
For $0<r_0<r<\frac{1}{4},$ the function $h_{0,2r}(z)=\rho_0^2\widetilde{h}_0(\rho_0{z}+z_0)$ is holomorphic in $D_{2r}$
and satisfies
\begin{align*}
\|\widetilde{h}-h_{0,2r}\|_{L^1(D_{2r})}+r^{\frac{2p-2}{p}}\|\tau(v)\|_{L^p(D_1)}=&
\|h-\widetilde{h}_0\|_{L^1(D(z_0,2r_0))}+r^{\frac{2p-2}{p}}\|\tau(v)\|_{L^p(D_1)} \\
 \leq& 8^3CC_{m-1}\left(r_0^{\frac{2p-2}{p}}\|\tau(u)\|_{L^p(D_{1})}\right)^{3^{2-m}}+r^{\frac{2p-2}{p}}\|\tau(u)\|_{L^p(D_1)}\\
 \leq& CC_{m-1}\left(r^{\frac{2p-2}{p}}\|\tau(u)\|_{L^p(D_{1})}\right)^{3^{2-m}},
\end{align*}
here $\widetilde{h}=\langle\frac{\partial v}{\partial z},\frac{\partial v}{\partial z}\rangle=\rho_0^2h(\rho_0{z}+z_0). $
Therefore, $v$ and $\widetilde{h}$ satisfies the conditions of Lemma \ref{lem:app-small2} with
\ben\label{eq:A0}
A_0=CC_{m-1}\Big(r^{\frac{2p-2}{p}}\|\tau(u)\|_{L^p(D_{1})}\Big)^{3^{2-m}}.
\een
Thus, there exists a holomorphic function $\widehat{h}_0$ in $D_{1}$ such that
\beno
\|\widetilde{h}-\widehat{h}_0\|_{L^1(D_{1})}\leq C\Big(A_0\ln\frac{1}{r}+\min\left\{\frac{A_0}{r},A_0^{\frac{1}{2}}+r^{\frac{1}{2}}\right\}+\|\tau(u)\|_{L^p(D_1)}\Big).
\eeno
For $A_0$ given by (\ref{eq:A0}), we have
\beno
&&A_0\ln\frac{1}{r}\leq C\frac{p}{2p-2}3^{m-2}C_{m-1}\|\tau(u)\|_{L^p(D_{1})}^{3^{2-m}}\leq
C3^{m}C_{m-1}\|\tau(u)\|_{L^p(D_{1})}^{3^{1-m}},\\
&&\min\Big(\frac{A_0}{r},A_0^{\frac{1}{2}}+r^{\frac{1}{2}}\Big)\leq A_0^{\frac{1}{2}}+\min\Big(\frac{A_0}{r},r^{\frac{1}{2}}\Big)\leq A_0^{\frac{1}{2}}+A_0^{\frac{1}{3}}\leq CC_{m-1}\|\tau(u)\|_{L^p(D_{1})}^{3^{1-m}}.
\eeno
This gives
\beno
\|\widetilde{h}-\widehat{h}_0\|_{L^1(D_{1})}\leq C3^{m}C_{m-1}\|\tau(u)\|_{L^p(D_{1})}^{3^{1-m}}.
\eeno
Let $h_{0}(z)=\rho_0^{-2}\widehat{h}_0(\frac{z-z_0}{\rho_0})$. Then $h_0(z)$ is holomorphic in $D\left(z_0,\rho_0\right)$
and satisfies
\begin{align*}
\|{h}-{h_0}\|_{L^1(D_{\frac{1}{4}})}\leq& \|{h}-{h_0}\|_{L^1(D\left(z_0,\rho_0\right))}\\
&=\|\widetilde{h}-\widehat{h}_0\|_{L^1(D_{1})} \leq C3^{m}C_{m-1}\|\tau(u)\|_{L^p(D_{1})}^{3^{1-m}}.
\end{align*}

Therefore,  the assertion holds for $m$ with $C_m=C3^{m}C_{m-1}$.  The proof of Proposition \ref{prop:holomorphic approx} is completed. 

\setcounter{equation}{0}

\section{Proof of Theorem \ref{thm:energy}}

Let us first prove the following energy decay estimates for the map 
satisfying the assumptions of Lemma \ref{lem:app-small2}.

\begin{Lemma}\label{lem:decay}
Let $u$ be as in Lemma \ref{lem:app-small2}. Then it holds that
\beno
&&E(u,D_{\rho}\backslash D_{r/\rho})\leq C\Big((A_0 +r)\ln\dfrac{\rho^2}{r}
+\rho^{\frac{4p-4}{p}}\Big)\quad \text{for}\quad \sqrt{r}\le \rho\le 1,\\
&&\textrm{osc}(u,D_{\rho}\backslash D_{r/\rho})\leq C\Big((A_0 +r)^{\frac{1}{2}}\ln\dfrac{1}{r}
+\rho^{\frac{2p-2}{p}}\Big)\quad\text{for}\quad 2\sqrt{r}\le \rho\le \f12.
\eeno
\end{Lemma}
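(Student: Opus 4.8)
The plan is to combine the holomorphic approximation of the Hopf differential (Lemma~\ref{lem:app-small2}, or rather the explicit estimates derived inside its proof) with a standard Pohozaev/Rellich-type identity that relates the radial and angular parts of the energy. First I would recall from the proof of Lemma~\ref{lem:app-small2} that, with $\Omega=D_1\setminus D_r$, we decomposed the relevant quantity in terms of the Laurent coefficients $a_n$ of the holomorphic part $G_1$, and obtained the bounds $\|G_1\|_{L^2(D_{\rho_1}\setminus D_{\rho_2})}^2\le 2\pi|a_{-1}|^2\ln(\rho_1/\rho_2)+\max(\rho_1,r/\rho_2)^2\|G_1\|_{L^2(D_1\setminus D_r)}^2$ together with $|a_{-1}|^2\le C(A_0+r)$ and $\|G_2\|_{L^{\frac{2p}{2-p}}(\R^2)}\le C\|\tau(u)\|_{L^p(\Omega)}$. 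Since $h=G^TG$ and $\partial_{\bar z}u$ is controlled by $G=G_1+G_2$, applying these on the annulus $D_\rho\setminus D_{r/\rho}$ (which satisfies $\max(\rho,r/(r/\rho))=\rho$, so the $\max$ factor is exactly $\rho^2$) gives
\[
\Big\|\frac{\partial u}{\partial z}\Big\|_{L^2(D_\rho\setminus D_{r/\rho})}^2\le C\big((A_0+r)\ln\tfrac{\rho^2}{r}+\rho^2\big)+C\|G_2\|_{L^2(D_\rho\setminus D_{r/\rho})}^2,
\]
and the last term is bounded by $C\rho^{\frac{4p-4}{p}}\|\tau(u)\|_{L^p}^2\le C\rho^{\frac{4p-4}{p}}$ by Hölder on the annulus of area $\sim\rho^2$. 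For the full energy one still needs to control $\|\partial_{\bar z}u\|_{L^2}$, but $|\partial_{\bar z}u|\le|\partial u|$ and $\partial_{\bar z}u$ satisfies $P(u)\triangle u=\tau$, so $\|\partial_{\bar z}u\|_{L^2(D_\rho\setminus D_{r/\rho})}$ is estimated the same way — alternatively one uses $|du|^2=2|\partial_z u|^2+2|\partial_{\bar z}u|^2$ and the bound on $|\partial_z u|^2-|\partial_{\bar z}u|^2=\mathrm{Re}\,h$ together with the $L^1$-closeness of $h$ to a holomorphic $h_0$, whose annular $L^1$-norm over $D_\rho\setminus D_{r/\rho}$ is itself controlled by its Laurent coefficients $b_n$ (dominated by $A_0$) and by $|b_{-1}|\ln(\rho^2/r)$. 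Combining, $E(u,D_\rho\setminus D_{r/\rho})\le C((A_0+r)\ln\frac{\rho^2}{r}+\rho^{\frac{4p-4}{p}})$, which absorbs the $\rho^2$ term since $\frac{4p-4}{p}<2$ makes $\rho^{\frac{4p-4}{p}}\ge\rho^2$ for $\rho\le1$.

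For the oscillation bound I would pass through the energy estimate dyadically. Split $D_\rho\setminus D_{r/\rho}$ into dyadic annuli $A_k=D_{2^{-k}\rho}\setminus D_{2^{-k-1}\rho}$; on each $A_k$ with inner radius $\ge\sqrt r$ the previous estimate (applied with the smaller radius playing the role of $\rho$, still $\ge\sqrt r$ so that $r/\text{inner}\le\sqrt r\le$ that radius) gives $E(u,A_k)\le C((A_0+r)+ (2^{-k}\rho)^{\frac{4p-4}{p}})$ — no logarithm, since $\ln(\text{ratio})$ is bounded on a single dyadic annulus. Then $\textrm{osc}(u,A_k)\le C E(u,A_k)^{1/2}\le C((A_0+r)^{1/2}+(2^{-k}\rho)^{\frac{2p-2}{p}})$, by the $\epsilon_0$-regularity Lemma~\ref{lem:small energy} rescaled to $A_k$ (the osc/energy bound on an annulus under small energy; one needs $\epsilon_0$ small so that each $A_k$ carries small energy, which the energy estimate guarantees once $A_0,r$ are small). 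Summing over the $O(\ln\frac{1}{r})$ dyadic scales between $2\sqrt r$ and $\frac12$ gives $\textrm{osc}(u,D_\rho\setminus D_{r/\rho})\le\sum_k\textrm{osc}(u,A_k)\le C(A_0+r)^{1/2}\ln\frac{1}{r}+C\sum_k(2^{-k}\rho)^{\frac{2p-2}{p}}$, and the geometric sum converges to $C\rho^{\frac{2p-2}{p}}$ since the exponent is positive. This is exactly the claimed bound.

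The main obstacle, I expect, is being careful with the bookkeeping of the $\max(\rho,r/\rho')$ factor in the annular $L^2$ estimate for $G_1$ — one has to check that on the symmetric annulus $D_\rho\setminus D_{r/\rho}$ this factor collapses cleanly to $\rho$ and not something larger, which is precisely why the statement is phrased for $\sqrt r\le\rho\le1$ (so that $r/\rho\le\rho$). A secondary technical point is justifying the local osc-from-energy inequality on each dyadic annulus uniformly in $k$: this requires rescaling $A_k$ to a fixed annulus, noting the rescaled tension field still has small $L^p$ norm (it picks up a favorable power of the scale since $p>1$), and invoking Lemma~\ref{lem:small energy}; the uniformity is what makes $p>1$ essential and is morally the same mechanism that drives the whole paper. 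Everything else is Hölder's inequality and summing geometric series.
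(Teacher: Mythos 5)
Your argument for the energy estimate follows the paper essentially verbatim: write $E$ in terms of $\|G\|_{L^2}^2$, split $G=G_1+G_2$, use the annular estimate (4.3) for $G_1$ (the $\max$ factor collapses to $\rho$ on the symmetric annulus, as you note), H\"older on $G_2$, and absorb $\rho^2$ into $\rho^{4(p-1)/p}$. (Your parenthetical ``alternative'' route via $|\partial_z u|^2-|\partial_{\bar z}u|^2=\mathrm{Re}\,h$ is wrong as written, since $u$ is $\R^k$-valued so $\partial_{\bar z}u=\overline{\partial_z u}$ and that difference vanishes; the correct Pohozaev-type identity is $|\partial_r u|^2-r^{-2}|\partial_\theta u|^2=4\,\mathrm{Re}(e^{2i\theta}h)$. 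But this aside is not needed for your main line.)

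For the oscillation, your route differs from the paper's. The paper bounds the oscillation on each annulus $D_{\rho_{j-1}}\setminus D_{\rho_j}$ by the scale-invariant Morrey embedding $W^{1,2p/(2-p)}\hookrightarrow C^0$, i.e. $\textrm{osc}\le C\rho_j^{(2p-2)/p}\|\nabla u\|_{L^{2p/(2-p)}}\le C\rho_j^{(2p-2)/p}\|G\|_{L^{2p/(2-p)}}$, then uses the interior estimate for the holomorphic factor $G_1$ (passing from $L^{2p/(2-p)}$ on the annulus to $L^2$ on a slightly larger one) together with (4.3) to extract $|a_{-1}|+\max(2e\rho_j,2r/\rho_j)+\rho_j^{(2p-2)/p}$, and sums. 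You instead apply the per-annulus energy estimate and then invoke the $\epsilon_0$-regularity Lemma 2.1. Both work and yield the same summed bound $(A_0+r)^{1/2}\ln\frac1r+\rho^{(2p-2)/p}$; your route is arguably more modular (reusing Lemma 2.1) while the paper's is self-contained in the $G_1,G_2$ framework and avoids reinvoking elliptic regularity. Two places where your write-up is slightly loose but easily repairable: (i) $\textrm{osc}(u,A_k)\le CE(u,A_k)^{1/2}$ is not quite what Lemma 2.1 gives — covering $A_k$ by disks of radius $\sim 2^{-k}\rho$ requires the energy on a slightly larger set $A_{k-1}\cup A_k\cup A_{k+1}$, and one must add the rescaled tension term $(2^{-k}\rho)^{2(p-1)/p}\|\tau\|_{L^p}$, which fortunately sums to the desired $\rho^{(2p-2)/p}$; and (ii) you only treat the dyadic annuli with inner radius $\ge\sqrt r$; on the lower half the $\max$ factor in (4.3) becomes $r/\rho_2$ rather than $\rho_1$, contributing an extra term $r/(2^{-k}\rho)$ per annulus whose geometric sum is $O(\rho)\le\rho^{(2p-2)/p}$, so the same conclusion holds, but this should be said.
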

\begin{proof}
Here we will use the notations in the proof of Lemma \ref{lem:app-small2}. By (\ref{eq:G1-up}) and (\ref{eq:a-1-up}), we get
\begin{align*}
E(u,D_{\rho}\backslash D_{r/\rho})\leq& C\|G\|_{L^2(D_{\rho}\backslash D_{r/\rho})}\leq C\Big(\|G_1\|_{L^2(D_{\rho}\backslash D_{r/\rho})}^2+\|G_2\|_{L^2(D_{\rho}\backslash D_{r/\rho})}^2\Big)\\
\leq& C\Big(|a_{-1}|^2\ln\dfrac{\rho^2}{r}+\rho^2\|G_1\|_{L^2(D_{1}\backslash{D_{r}})}^2+\rho^{\frac{4p-4}{p}}\|G_2\|_{L^\frac{2p}{2-p}(D_{1}\backslash D_{r})}^2\Big)\\
\leq& C\Big((A_0 +r)\ln\dfrac{\rho^2}{r}
+\rho^2+\rho^{\frac{4p-4}{p}}\|\tau\|_{L^p(D_{1}\backslash D_{r})}\Big)\\
\leq& C\Big((A_0 +r)\ln\dfrac{\rho^2}{r}
+\rho^{\frac{4p-4}{p}}\Big).
\end{align*}

If $2\sqrt{r}\leq\rho\leq\frac{1}{2}$, there exists a positive integer $\ell>1$ such that $e^{-\ell}\rho< r/\rho\leq e^{-\ell+1}\rho$ and $\ell\leq \ln\dfrac{\rho^2}{r}+1$.
Let $\rho_j=e^{-j}\rho$ for $0\leq j<\ell$ and $\rho_\ell=r/\rho$. Then by (\ref{eq:G1-up}) and Sobolev embedding and the fact that $G_1$ is holomorphic, we deduce that for $0< j\leq \ell$,
\begin{align*}
\textrm{osc}(u,D_{\rho_{j-1}}\backslash D_{\rho_j})\leq&  C\rho_j^{\frac{2p-2}{p}}
\|\nabla u\|_{L^\frac{2p}{2-p}(D_{e\rho_j}\backslash D_{\rho_j})}\leq C\rho_j^{\frac{2p-2}{p}}\|G\|_{L^\frac{2p}{2-p}(D_{e\rho_j}\backslash D_{\rho_j})}\\
\leq & C\rho_j^{\frac{2p-2}{p}}\Big(\|G_1\|_{L^\frac{2p}{2-p}(D_{e\rho_j}\backslash D_{\rho_j})}+\|G_2\|_{L^\frac{2p}{2-p}(D_{e\rho_j}\backslash D_{\rho_j})}\Big)\\
\leq & C\Big(\|G_1\|_{L^2(D_{2e\rho_j}\backslash D_{\rho_j/2})}+\rho_j^{\frac{2p-2}{p}}\|G_2\|_{L^\frac{2p}{2-p}(D_{1}\backslash D_{r})}\Big)\\
\leq& C\Big(|a_{-1}|+\max\{{2e\rho_j},2r/\rho_j\}\|G_1\|_{L^2(D_{1}\backslash D_{r})}+\rho_j^{\frac{2p-2}{p}}\|\tau\|_{L^p(D_{1}\backslash D_{r})}\Big)\\
\leq &C\Big(|a_{-1}|+\max\{{2e\rho_j},2r/\rho_j\}+\rho_j^{\frac{2p-2}{p}}\Big),
\end{align*}
which gives
\begin{align*}
\textrm{osc}(u,D_{\rho}\backslash D_{r/\rho})\leq&\sum_{j=1}^\ell \textrm{osc}(u,D_{\rho_{j-1}}
\backslash D_{\rho_j})\\
\leq& C\sum_{j=1}^\ell \Big(|a_{-1}|+\max\big({2e\rho_j},2r/\rho_j\big)+\rho_j^{\frac{2p-2}{p}}\Big)\\
\leq& C \Big(|a_{-1}|\ell+\rho+\rho^{\frac{2p-2}{p}}\Big)\\
\leq& C\Big((A_0 +r)^{\frac{1}{2}}\ln\dfrac{1}{r}
+\rho^{\frac{2p-2}{p}}\Big).
\end{align*}
The proof is finished.
\end{proof}
\medskip

Now let us complete the proof of Theorem \ref{thm:energy}.
By the construction of bubble tree,  it is sufficient to prove (\ref{eq:energy-lim}) and (\ref{eq:neckless}) under the assumption of $E(u_n,D(x_n^j,r_n^{f(j)}{\delta_0})\backslash D(x_n^j,r_n^{j}{R_0}))\leq\epsilon.$

We can also assume $\delta_0^{\frac{2p-2}{p}}\Lambda\leq 1,\ r_n^i\leq 1$. Let $m$ be a positive integer so that $m>\Lambda^2/\epsilon$.
Let $r=R_0r_n^i/(\delta_0r_n^{f(i)})$ and consider the function $w_n^i(z)=u_n(x_n^i+\delta_0r_n^{f(i)}{z})$.  It holds that
\beno
&&E(w_n^i,D_1\backslash D_r)=
E(u_n,D(x_n^i,r_n^{f(i)}{\delta_0})\backslash D(x_n^i,r_n^{i}{R_0}))\leq \epsilon,\\
&&\|\tau(w_n^i)\|_{L^p(D_{1})}=(\delta_0r_n^{f(i)})^{\frac{2p-2}{p}}
\|\tau(u_n)\|_{L^p\left(D(x_n^i,r_n^{f(i)}{\delta_0})\right)}\leq
\delta_0^{\frac{2p-2}{p}}\Lambda\leq 1.\eeno
For $n$ sufficiently large, $0<r_n^i<r<\frac{1}{8}$. We consider the function $v_n^i(z)=w_n^i(8r{z})=u_n(x_n^i+8R_0r_n^iz)$, which satisfies
\beno
&&E(v_n^i,D_1)=E\left(u_n,D(x_n^i,8r_n^{i}{R_0})\right)\leq \Lambda^2\leq m\epsilon,\\
&&\|\tau(v_n^i)\|_{L^p(D_{1})}=(8r)^{\frac{2p-2}{p}}\|\tau(w_n^i)\|_{L^p\left(D_{8r}\right)}\leq
(8r)^{\frac{2p-2}{p}}\leq 1.
\eeno
Thus, Proposition \ref{prop:holomorphic approx} and scaling argument ensure that there exists a holomorphic function $h_{0,2r}^{n,i}$ in $D_{2r}$ such that
\beno
\|{h}^{n,i}-h_{0,2r}^{n,i}\|_{L^1(D_{2r})}\leq C_{m}\|\tau(v_n^i)\|_{L^p(D_{1})}^{3^{1-m}}\leq C_{m}(8r)^{\frac{2p-2}{p}3^{1-m}}.
\eeno
Hence, we get
\begin{align*}
\|{h}^{n,i}-h_{0,2r}^{n,i}\|_{L^1(D_{2r})}
+r^{\frac{2p-2}{p}}\|\tau(w_n^i)\|_{L^p(D_1)}=&
C_{m}(8r)^{\frac{2p-2}{p}3^{1-m}}+r^{\frac{2p-2}{p}} \\
\leq& CC_{m}r^{\frac{2p-2}{p}3^{1-m}},
\end{align*}
here ${h}^{n,i}=\big\langle\frac{\partial w_n^i}{\partial z},\frac{\partial w_n^i}{\partial z}\big\rangle=(\delta_0r_n^{f(i)})^2h_n(x_n^i+\delta_0r_n^{f(i)}{z}).$
Thus, the conditions in Lemma \ref{lem:app-small2} are satisfied for $w_n^i$ with  $A_0=CC_{m}r^{\frac{2p-2}{p}3^{1-m}}$. For this $A_0$, it holds that
\beno
&&\lim\limits_{n\to \infty}(A_0 +r)\ln\dfrac{\rho^2}{r}=\lim\limits_{n\to \infty}(CC_{m}r^{\frac{2p-2}{p}3^{1-m}} +r)\ln\dfrac{\rho^2}{r}=0,\\
&&\lim\limits_{n\to \infty}(A_0 +r)^\frac{1}{2}\ln\dfrac{1}{r}=\lim\limits_{n\to \infty}(CC_{m}r^{\frac{2p-2}{p}3^{1-m}} +r)^\frac{1}{2}\ln\dfrac{1}{r}=0,
\eeno
as $r_n^i\to 0$(recall $r=R_0r_n^i/(\delta_0r_n^{f(i)})\to 0$). Then we apply Lemma \ref{lem:decay} to conclude that
\beno
&&\limsup\limits_{n\to \infty}E(u_n,D(x_n^i,\rho r_n^{f(i)}{\delta_0})\backslash D(x_n^i,r_n^{i}{R_0}/\rho))\leq C
\rho^{\frac{4p-4}{p}},\\
&&\limsup\limits_{n\to \infty}\ \textrm{osc}(u_n,D(x_n^i,\rho r_n^{f(i)}{\delta_0})\backslash D(x_n^i,r_n^{i}{R_0}/\rho))\leq C
\rho^{\frac{2p-2}{p}},
\eeno
which yield (\ref{eq:energy-lim}) and (\ref{eq:neckless}) by taking $\rho\rightarrow 0$. The proof of Theorem \ref{thm:energy} is completed.\endproof

\section{A necessary and sufficient condition of energy identity}

Let us first recall the following result \cite{DT, LZ2}.

\begin{Lemma}\label{lem:the tangential energy}
If $\tau(u_n)$ is bounded in $L^p$ for some $p>1$, then the tangential energy on the neck domain is vanishing, i.e.,
\beno
\lim_{\delta\rightarrow 0}\lim_{R\rightarrow \infty}\lim_{n\rightarrow \infty}\int_{D(0,r_n^{f(i)}\delta)\setminus D(0,r_n^iR)} |x|^{-2}|\partial_\theta u_n(x^i_n+x)|^2dx=0.
\eeno
\end{Lemma}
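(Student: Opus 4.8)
I would obtain Lemma~\ref{lem:the tangential energy} first as an immediate consequence of the energy identity already established, and then, since this lemma is only the ``easy half'' of the energy identity and holds independently of the holomorphic approximation, also record the classical self-contained argument of \cite{DT, LZ2}. For the short route, note that in polar coordinates $|\nabla u_n|^2=|\partial_r u_n|^2+r^{-2}|\partial_\theta u_n|^2\ge|x|^{-2}|\partial_\theta u_n|^2$, so the integrand in the statement is dominated by the energy density of $u_n$ on the neck $D(x_n^i,r_n^{f(i)}\delta)\setminus D(x_n^i,r_n^iR)$; the claim is then immediate from the neck energy decay \eqref{eq:energy-lim} proved in the course of Theorem~\ref{thm:energy} (equivalently, from Lemma~\ref{lem:decay} with the choice of $A_0$ made there).

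For the self-contained route, after the bubble-tree rescaling recalled in Section~2 it suffices to bound the tangential energy of one map $u$ on a model neck $D_\delta\setminus D_{Rr_n}$ with $E(u,\cdot)\le\epsilon$ and $\|\tau(u)\|_{L^p}\le\Lambda$. I would cut the neck into the dyadic annuli $P_j=D_{2^{j+1}}\setminus D_{2^j}$ (equivalently into the unit pieces of the cylinder obtained by $t=-\log r$, on which the energy is conformally invariant), with energies $\lambda_j$ satisfying $\sum_j\lambda_j\le\epsilon$; Lemma~\ref{lem:small energy}, applied after rescaling each $P_j$ to unit scale, furnishes $C^0$- and $W^{2,p}$-control in terms of $\lambda_j^{1/2}+\|\tau(u)\|_{L^p(P_j)}$. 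Multiplying $\triangle u-A(u)(du,du)=\tau(u)$ by $x\cdot\nabla u=r\partial_r u$ and using $A(u)(du,du)\perp T_uN\ni r\partial_r u$, so that the second fundamental form term disappears, then integrating over $D_t\setminus D_s$, yields the Pohozaev identity
\[
\Big[\int_0^{2\pi}\big(r^2|\partial_r u|^2-|\partial_\theta u|^2\big)\,d\theta\Big]_{r=s}^{r=t}=2\int_{D_t\setminus D_s}(r\partial_r u)\cdot\tau(u)\,dx,
\]
whose right-hand side, rescaled to each $P_j$, is controlled by $\lambda_j^{1/2}$ times a small multiple of $\|\tau(u)\|_{L^p(P_j)}$ and is negligible when summed. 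Hence the left-hand ``Pohozaev quantity'' is almost constant along the neck; since the neck is conformally long while the total energy stays $\le\epsilon$, that constant must be $o(1)$, which already forces the radial and tangential energies on the neck to be asymptotically equal.

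To upgrade ``comparable'' to ``vanishing'', I would split $u=\bar u+v$ on each circle into its angular average $\bar u$ and the non-constant remainder $v$. On the cylinder, $v$ solves an elliptic equation for which $\partial_t^2+\partial_\theta^2$ has a spectral gap on functions of zero angular mean, so a three-circle/interpolation estimate bounds the tangential energy of $v$ on $P_j$ by a geometrically weighted sum $\sum_i e^{-|i-j|}\big(\lambda_i+\lambda_i^{1/2}\|\tau(u)\|_{L^p(P_i)}\big)$ plus contributions from the two ends of the neck, which vanish because there $u_n$ converges strongly (to the base map on one side, to the bubble on the other). Summing in $j$, using $\sum_i\lambda_i\le\epsilon$ and, crucially, $p>1$ so that $\sum_i\|\tau(u)\|_{L^p(P_i)}^2\le\|\tau(u)\|_{L^p}^2<\infty$, then sending $\delta\to0$, $R\to\infty$ and $n\to\infty$, gives the conclusion. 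The step I expect to be the main obstacle is exactly this last one: the energy bound by itself yields only $\limsup\le\epsilon$, and pushing the tangential energy all the way to $0$ requires both the exponential decay of the non-constant angular Fourier modes and the subcritical integrability $p>1$ — the endpoint $p=1$ being precisely where Parker's example obstructs the conclusion.
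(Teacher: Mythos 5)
The paper does not prove this lemma; it cites it from \cite{DT, LZ2}. So there is no ``paper's own proof'' to match — what matters is whether your argument is actually a proof.

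Your first route is logically valid in the context of this paper, since \eqref{eq:energy-lim} has already been established in Section~5 and $|x|^{-2}|\partial_\theta u_n|^2\le|\nabla u_n|^2$ pointwise. But it defeats the purpose of Section~6, whose point is to give an \emph{alternative} proof of the energy identity via Lemma~\ref{lem:the tangential energy} and Proposition~\ref{energy eqv}; if the lemma is derived from \eqref{eq:energy-lim}, that ``alternative'' proof becomes circular. You acknowledge this, which is why the self-contained route is the one that matters.

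In the self-contained route there is a genuine gap in the final step, and also an unnecessary detour. The detour: the Pohozaev identity is not needed for the tangential energy alone; in \cite{DT, LZ2} it enters only later, to compare radial and tangential energies once the tangential part has already been shown to vanish. You notice yourself that Pohozaev only gives ``comparable,'' and the decomposition step is supposed to do the real work, so the Pohozaev paragraph carries no load. The gap: the estimate you state — tangential energy on $P_j$ bounded by $\sum_i e^{-|i-j|}\bigl(\lambda_i+\lambda_i^{1/2}\|\tau\|_{L^p(P_i)}\bigr)$ plus end contributions — does not imply the conclusion after summing in $j$. Summing gives $\lesssim\sum_i\lambda_i+\bigl(\sum_i\lambda_i\bigr)^{1/2}\bigl(\sum_i\|\tau\|_{L^p(P_i)}^2\bigr)^{1/2}\lesssim\epsilon+\sqrt\epsilon\,\Lambda$, which is a fixed small number, not a quantity that tends to zero under $n\to\infty$, $R\to\infty$, $\delta\to 0$. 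The $\lambda_i$ on the right are \emph{full} energies on the annuli, which do not go to zero on the neck (that would already be the hard energy identity), so the bound is not self-improving. To close the argument as in \cite{DT, LZ2}, one integrates $r^{-2}|\partial_\theta u|^2$ against $u-u^*$ (angular mean), and the right-hand side must be organized so that the $A(u)(du,du)$ contribution is $\lesssim\sqrt\epsilon\cdot(\text{tangential energy})$ — hence absorbable — and the $\tau$ contribution is $\lesssim\|\tau\|_{L^p}\cdot(\text{measure of the neck})^{1/p'}\cdot\sup\|u-u^*\|_\infty$, which does vanish in the triple limit because $p>1$ forces $p'<\infty$, giving a positive power of the shrinking outer radius. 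It is this vanishing-measure factor, not the summability $\sum_i\|\tau\|_{L^p(P_i)}^2<\infty$, that uses $p>1$, and your version of the estimate does not produce it.
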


Notice that
\begin{align*}
h_n(x^i_n+x)=&h_n(x^i_n+re^{i\theta})\\
=&\frac14e^{-2i\theta}\Big(|\partial_r u_n|^2-r^{-2}|\partial_\theta u_n|^2-\frac{2i}{r}\langle\partial_r u_n,\partial_\theta u_n\rangle\Big)(x^i_n+re^{i\theta}),
\end{align*}
which motivates the following equivalent statement of the energy identity.

\begin{Proposition}\label{energy eqv}
Let $h_n=\big\langle\frac{\partial u_n}{\partial z},\frac{\partial u_n}{\partial z}\big\rangle$. Then the energy identity holds if and only if
\ben\label{eq1}
\lim_{\delta\rightarrow 0}\limsup_{n\rightarrow \infty}\|h_n\|_{L^1(D_{\delta})}=0.
\een
\end{Proposition}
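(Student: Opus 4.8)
The plan is to show that condition \eqref{eq1} is equivalent to the neck-energy vanishing \eqref{eq:energy-lim}, which we already know is equivalent to the energy identity (see Section 2). Recall the polar-coordinate identity for the Hopf differential recorded just above the statement:
\beno
h_n(x_n^i+re^{i\theta})=\frac14e^{-2i\theta}\Big(|\partial_r u_n|^2-r^{-2}|\partial_\theta u_n|^2-\frac{2i}{r}\langle\partial_r u_n,\partial_\theta u_n\rangle\Big).
\eeno
Taking absolute values, and then integrating in $r\,dr\,d\theta$ over an annulus, the term $|\partial_r u_n|^2-r^{-2}|\partial_\theta u_n|^2$ is controlled by the full energy density while $r^{-2}|\partial_\theta u_n|^2$ is the tangential energy density; conversely $|\partial_r u_n|^2$ is bounded by $|h_n|$ plus the tangential part. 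So on any neck annulus $D(x_n^i,\delta r_n^{f(i)})\setminus D(x_n^i,R r_n^i)$ we obtain the two-sided bound
\beno
E(u_n,\text{annulus})\le C\Big(\|h_n\|_{L^1(\text{annulus})}+\int_{\text{annulus}}|x|^{-2}|\partial_\theta u_n|^2\Big)
\eeno
and the reverse inequality $\|h_n\|_{L^1(\text{annulus})}\le C\,E(u_n,\text{annulus})$.

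First I would prove the "if" direction. Assume \eqref{eq1} holds. Fix $i$. We must bound the neck energy. Split the neck $D(x_n^i,\delta r_n^{f(i)})\setminus D(x_n^i,R r_n^i)$ using the translation $x\mapsto x_n^i+x$. On this annulus, $\|h_n(x_n^i+\cdot)\|_{L^1}$ is small: it is contained, after suitable bookkeeping of the bubble tree, in $D_{\delta'}$-type regions where \eqref{eq1} applies — more precisely, for each neck one reduces to the scale-$r_n^{f(i)}$ picture where the relevant disk shrinks to $0$, so $\limsup_n\|h_n\|_{L^1(\text{neck})}\to 0$ as $\delta\to0$. Combining with Lemma \ref{lem:the tangential energy}, which says the tangential energy on the neck vanishes in the iterated limit, the two-sided bound above gives $\lim_{\delta\to0}\lim_{R\to\infty}\limsup_n E(u_n,\text{neck})=0$, i.e. \eqref{eq:energy-lim}, hence the energy identity.

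For the "only if" direction, assume the energy identity, equivalently \eqref{eq:energy-lim} and the bubble-tree decomposition from Section 2. Then for $n$ large, $D_\delta$ decomposes (up to the finitely many bubble regions it contains and the weak-limit part) into neck annuli plus the rescaled bubble domains $D(x_n^i,r_n^iR)$ plus the region where $u_n\to u$ strongly. On the strong-convergence part, $\|h_n\|_{L^1(D_\delta\setminus\bigcup\text{bubbles and necks})}\to\|h(u)\|_{L^1(D_\delta\setminus\cdots)}\to0$ as $\delta\to0$ since $h(u)\in L^1$. On each rescaled bubble region $D(x_n^i,r_n^iR)$, $\|h_n\|_{L^1}$ converges to $\|h(w^i)\|_{L^1(D_R)}$ which is finite (bounded by $E(w^i)$) and, crucially, the part of the bubble sitting at scale $r_n^i$ inside a fixed $D_\delta$ contributes $\|h_n\|_{L^1}\le C E(u_n, D(x_n^i,r_n^iR))$, and the neck contributions are handled by the reverse inequality $\|h_n\|_{L^1(\text{neck})}\le CE(u_n,\text{neck})\to0$ via \eqref{eq:energy-lim}. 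Summing the finitely many pieces and letting $\delta\to0$ (so that each top-level bubble scale $r_n^i/\delta\to 0$ and $h(u)$ mass near $0$ vanishes) yields \eqref{eq1}.

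The main obstacle is the bookkeeping in the "only if" direction: one must carefully account for the nested bubble-tree structure so that $D_\delta$ is genuinely exhausted by necks, rescaled bubbles, and the strong-convergence region, without double counting, and control the finitely many bubble contributions which individually do not tend to zero but whose "footprint" inside $D_\delta$ does. The cleanest route is to use the same partition of $D_\delta$ that underlies formula \eqref{eq:energy-lim} in Section 2, apply the reverse inequality $\|h_n\|_{L^1}\le C E(u_n,\cdot)$ on the neck pieces, use strong $W^{1,2}$ convergence on the bulk pieces and on the fixed-radius rescaled bubble pieces, and then take the iterated limit $R\to\infty$, $\delta\to0$. The "if" direction is more routine given Lemma \ref{lem:the tangential energy} and the elementary algebraic identity relating $|h_n|$, the radial energy, and the tangential energy.
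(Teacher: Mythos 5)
Your ``if'' direction is sound and follows the paper's route: on the neck you use the polar form of $h_n$, Lemma \ref{lem:the tangential energy} to kill the tangential part, the pointwise bound $|\partial_r u_n|^2 \le 4|h_n|+|x|^{-2}|\partial_\theta u_n|^2$, and the fact that the neck annulus sits inside $D_{2\delta}$ for large $n$, so \eqref{eq1} controls the neck energy and gives \eqref{eq:energy-lim}.

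The ``only if'' direction has a genuine gap. After you partition $D_\delta$ into necks, rescaled bubble cores $D(x_n^i,r_n^iR)\setminus\cup_{x\in Z_i}D(x_n^i+r_n^ix,r_n^i\delta)$, and the bulk, the neck pieces are fine (via $\|h_n\|_{L^1}\le E(u_n,\cdot)$ and \eqref{eq:energy-lim}), but on each bubble core you only observe that $\|h_n\|_{L^1}\to\|h(w^i)\|_{L^1}$ is \emph{finite} and bounded by $E(w^i)$, and then appeal to a ``shrinking footprint'' as $\delta\to0$. That does not work: for fixed $\delta$ and $R$ the whole ball $D(x_n^i,r_n^iR)$ is contained in $D_\delta$ for $n$ large, so its full contribution $\|h^i\|_{L^1(D_R\setminus\cup_{x\in Z_i}D(x,\delta))}$ survives the $n\to\infty$ limit, and it only grows as $R\to\infty$; nothing in your argument makes it vanish as $\delta\to0$. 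The missing ingredient, which the paper's proof hinges on, is that each $w^i$ is a harmonic sphere of finite energy, hence its Hopf differential $h^i=\langle\partial_z w^i,\partial_z w^i\rangle$ is a holomorphic function on $\mathbb{C}$ lying in $L^1(\mathbb{C})$, and therefore $h^i\equiv 0$. Once you know $h^i\equiv 0$, strong $W^{1,2}$-convergence on the bubble cores gives $\|h_n\|_{L^1(\text{bubble core})}\to 0$, and summing the finitely many pieces yields \eqref{eq1}. Without this vanishing of the bubble Hopf differentials the argument cannot close.
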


\begin{proof}
On the one hand, if (\ref{eq1})  holds,  then we have by Lemma \ref{lem:the tangential energy} that
\begin{align*}
&\lim\limits_{\delta\rightarrow 0}\lim\limits_{R\rightarrow \infty}\limsup\limits_{n\rightarrow \infty}
E\big(u_n, D(x^i_n,r_n^{f(i)}\delta)\setminus D(x^i_n,r_n^iR)\big)\\
&=\lim\limits_{\delta\rightarrow 0}\lim\limits_{R\rightarrow \infty}\limsup\limits_{n\rightarrow \infty}\dfrac{1}{2}
\int_{D(0,r_n^{f(i)}\delta)\setminus D(0,r_n^iR)}\left(|\partial_r u_n|^2-|x|^{-2}|\partial_\theta u_n|^2\right)(x^i_n+x)dx
\\ &\leq\lim\limits_{\delta\rightarrow 0}\lim\limits_{R\rightarrow \infty}\limsup\limits_{n\rightarrow \infty}
2\int_{D(0,r_n^{f(i)}\delta)\setminus D(0,r_n^iR)}|h_n(x^i_n+x)|dx\\
&=\lim\limits_{\delta\rightarrow 0}\lim\limits_{R\rightarrow \infty}\limsup\limits_{n\rightarrow \infty}
2\|h_n\|_{L^1(D(x^i_n,r_n^{f(i)}\delta)\setminus D(x^i_n,r_n^iR))}\\
&\leq\lim\limits_{\delta\rightarrow 0}\lim\limits_{R\rightarrow \infty}\limsup\limits_{n\rightarrow \infty}
2\|h_n\|_{L^1(D_{\delta})}=0,
\end{align*}
which gives (\ref{eq:energy-lim}), thus the energy identity.\smallskip

On the other hand, if  the energy identity holds, we denote
$$w_n^i(z)=u_n(x_n^i+r_n^{i}{z}),\ \ {h}_n^{i}=\big\langle\frac{\partial w_n^i}{\partial z},\frac{\partial w_n^i}{\partial z}\big\rangle,\ \ {h}^{i}=\big\langle\frac{\partial w^i}{\partial z},\frac{\partial w^i}{\partial z}\big\rangle$$
for $i=1,\cdots k_0.$ Then ${h}_n^{i}(z)=(r_n^{i})^2h_n(x_n^i+r_n^{i}{z})$, and ${h}^{i}$ is a $L^1$ holomorphic function in $\mathbb{C}$, thus ${h}^{i}=0$.
Thanks to
\beno
&&u_n^i\to w^i\ \text{strongly in}\ W^{1,2}(D_R\setminus\cup_{x\in Z_i}D(x,\delta))\ \text{as}\ n\to \infty,
\eeno
we infer that as $n\to \infty$,
\beno
&&h_n^i\to h^i=0\quad \text{strongly in}\ L^{1}(D_R\setminus\cup_{x\in Z_i}D(x,\delta)),\\
&&\|h_n\|_{L^1(D(x_n^i,r_n^iR)\setminus\cup_{x\in Z_i}D(x_n^i+r_n^ix,r_n^i\delta))}=\|h_n^i\|_{L^1(D_R\setminus\cup_{x\in Z_i}D(x,\delta))}\to 0.
\eeno
Notice that 
\begin{align*}
\|h_n\|_{L^1(D_{\delta})}&\leq\sum\limits_{i=1}^{k_0}\|h_n\|_{L^1(D(x_n^i,r_n^iR)\setminus\cup_{x\in Z_i}D(x_n^i+r_n^ix,r_n^i\delta))}+\sum\limits_{i=1}^{k_0}\|h_n\|_{L^1(D(x^i_n,r_n^{f(i)}\delta)\setminus D(x^i_n,r_n^{i}R))}\\
&\le \sum\limits_{i=1}^{k_0}\|h_n\|_{L^1(D(x_n^i,r_n^iR)}\setminus\cup_{x\in Z_i}D(x_n^i+r_n^ix,r_n^i\delta))\\
&\quad+\sum\limits_{i=1}^{k_0}E(u_n,D(x^i_n,r_n^{f(i)}\delta)\setminus D(x^i_n,r_n^{i}R)).
\end{align*}
Thus, (\ref{eq1}) follows easily from (\ref{eq:energy-lim}).
\end{proof}
\medskip

Using Lemma \ref{energy eqv} and Proposition \ref{prop:holomorphic approx},  let us present another proof of the energy identity.\smallskip

Let $m$ be a positive integer so that $m>\Lambda^2/\epsilon$. Then for fixed $0<\delta_1<1,$ we consider the function $w_n(z)=u_n(\delta_1{z})$, which satisfies
\beno
&&E(w_n,D_1)=E\left(u_n,D_{\delta_1}\right)\leq \Lambda^2\leq m\epsilon,\\
&&\|\tau(w_n)\|_{L^p(D_{1})}=\delta_1^{\frac{2p-2}{p}}\|\tau(u_n)\|_{L^p\left(D_{\delta_1}\right)}\leq
\delta_1^{\frac{2p-2}{p}}\Lambda.
\eeno
Thus, Proposition \ref{prop:holomorphic approx} and scaling argument ensure that there exists a holomorphic function $h_{0,n}$ in $D_{\delta_1/4}$ such that
\beno
\|{h}_{n}-h_{0,n}\|_{L^1(D_{\delta_1/4})}\leq C_{m}\|\tau(w_n)\|_{L^p(D_{1})}^{3^{1-m}}\leq CC_{m}(\delta_1)^{\frac{2p-2}{p}3^{1-m}}.
\eeno
Therefore, for $0<\delta<\delta_1/4,$
\begin{align*}
\|{h}_{n}\|_{L^1(D_{\delta})}\leq& \|{h}_{n}-h_{0,n}\|_{L^1(D_{\delta})}+\|h_{0,n}\|_{L^1(D_{\delta})}\\ \leq&
\|{h}_{n}-h_{0,n}\|_{L^1(D_{\delta_1/4})}+\left({4\delta}/{\delta_1}\right)^2\|h_{0,n}\|_{L^1(D_{\delta_1/4})}\\ \leq&
2\|{h}_{n}-h_{0,n}\|_{L^1(D_{\delta_1/4})}+\left({4\delta}/{\delta_1}\right)^2\|h_{n}\|_{L^1(D_{\delta_1/4})}\\ \leq&
CC_{m}(\delta_1)^{\frac{2p-2}{p}3^{1-m}}+\left({4\delta}/{\delta_1}\right)^2\Lambda^2.
\end{align*}
Then (\ref{eq1}) follows by first letting $\delta\to 0$, then letting $\delta_1\to 0$.

\section*{Acknowledgements}

The authors thank Xiangrong Zhu for many profitable comments and discussions. Wendong Wang is partially supported by NSF of China under Grants 11301048 and the fundamental research funds for the central universities. Zhifei Zhang is partially supported by NSF of China under Grants 11371039 and 11425103.

\end{document}